\documentclass[10pt]{article}
\usepackage{bbm}
\usepackage{amsmath}
\usepackage{amsthm}
\usepackage{tabularx}
\usepackage{enumitem}
\usepackage{amssymb} 
\usepackage{mathrsfs}
\usepackage{caption}
\usepackage{graphicx}
\usepackage{authblk}
\usepackage[hypertexnames=false,colorlinks=true,urlcolor=blue,linkcolor=blue,citecolor=blue]{hyperref}
\usepackage[numbers,comma,square,sort&compress]{natbib}
\usepackage[letterpaper,text={6.5in,9in},centering]{geometry}


\setlength{\parindent}{0.0in}
\setlength{\parskip}{1.0ex plus0.2ex minus0.2ex}




\makeatletter\@addtoreset{equation}{section}\makeatother

\newtheorem{thm}{Theorem}[section] 
\newtheorem{lem}[thm]{Lemma}  
\newtheorem*{lem*}{Lemma}  
 
\newtheorem{prop}[thm]{Proposition}  
\newtheorem{hyp}[thm]{Hypothesis}
\newtheorem{defn}[thm]{Definition}
\newtheorem{rmk}[thm]{Remark}


\newenvironment{Acknowledgment}%
 {\begin{trivlist}\item[]\textbf{Acknowledgments.}}{\end{trivlist}}
%
%
%

%

\usepackage{tikz}

\graphicspath{{Figures/}}

\begin{document}

\title{The role of spatial dimension in the emergence of localised radial patterns from a Turing instability}
\author{Dan J. Hill}

\affil{\small Fachrichtung Mathematik, Universit\"at des Saarlandes, Postfach 151150, 66041 Saarbr\"ucken, Germany}

\date{}
\maketitle

\begin{abstract}
\noindent The emergence of localised radial patterns from a Turing instability has been well studied in two and three dimensional settings and predicted for higher spatial dimensions. We prove the existence of localised $(n+1)$-dimensional radial patterns in general two-component reaction-diffusion systems near a Turing instability, where $n>0$ is taken to be a continuous parameter. We determine explicit dependence of each pattern's radial profile on the dimension $n$ through the introduction of $(n+1)$-dimensional Bessel functions, revealing a deep connection between the formation of localised radial patterns in different spatial dimensions.
\end{abstract}
%
%
%
%
%
%
%
%
%
%
\section{Introduction}\label{s:intro}

In this work, we study the effect of a spatial domain's dimension on the existence of stationary localised radial patterns bifurcating from a Turing instability. We consider a general class of two-component reaction-diffusion systems which, for a spatial coordinate $\mathbf{x}:=(x_1, \dots, x_{n+1})\in\mathbb{R}^{n+1}$ with $n\in\mathbb{N}_{0}$, can be written as
\begin{equation}\label{R-DEqn:Gen}
	\partial_{t}\mathbf{u} = \mathbf{D}(\mu)\Delta\mathbf{u} - \mathbf{f}(\mathbf{u},\mu),
\end{equation}
where $\mathbf{u}\in\mathbb{R}^{2}$, $\Delta:=\partial_{x_{1}}^{2} + \dots + \partial_{x_{n+1}}^{2}$ is the $(n+1)$-dimensional Laplace operator, $\mathbf{D}$ is the diffusion coefficient matrix, $\mathbf{f}$ is a nonlinear function, and $\mu\in\mathbb{R}$ is the bifurcation parameter. Reaction-diffusion systems of the form \eqref{R-DEqn:Gen} have been found to model numerous physical phenomena, including the growth of desert vegetation \cite{vonHardenberg2001,Zelnik2015Gradual}, the spread of epidemics \cite{Allen2008RDEpidemic}, and nonlinear optics \cite{LugiatoLefever1987Optics}. Beyond specific applications, there is also interest in considering Turing instabilities in $n$ spatial dimensions \cite{VanGorder2021Turing_Manifolds,Alber2005n-dimensional}. Fully localised patterns, consisting of compact patches of spatially-oscillating structure surrounded by a uniform state, have been numerically found to emerge as solutions of \eqref{R-DEqn:Gen} when the uniform state undergoes a Turing instability \cite{lloyd2008localized,Clerc2021labyrinthine}. However, our mathematical understanding of the emergence of such patterns remains quite limited when the spatial dimension is larger than one; see \cite{bramburger2024localized} for a recent review on the mathematical study of localised patterns.

A notable subclass of fully localised patterns are those which depend purely on the radial coordinate $r:=|\mathbf{x}|$. We refer to such solutions as \textit{radial patterns}, but they are also called \textit{axisymmetric} or \textit{spherically-symmetric} patterns in two and three-dimensions, respectively. Such two-dimensional patterns have been found in vegetation patches and rings \cite{Meron2007,Hill2022LocalizedVegetation}, chemical reactions \cite{vanag2004stationary}, barrier gas discharge \cite{Gurevich2003Ring}, laser processing \cite{Katayama2003Rings}, and as Kerr solitons \cite{mcsloy2002computationally,menesguen2006optical}; see \cite{Clerc2023Rings} and the references therein for more examples of radial patterns in physical contexts. Three dimensional localised radial patterns are less well known, but have been observed in Belousov--Zhabotinsky reactions \cite{Bansagi2011Tomography} and as cavity bullets in nonlinear optics \cite{Tlidi2021LightBullets}. When restricting to stationary radial solutions, \eqref{R-DEqn:Gen} becomes
\begin{equation}\label{R-DEqn}
	\mathbf{0} = \mathbf{D}(\mu)\Delta_{n}\mathbf{u} - \mathbf{f}(\mathbf{u},\mu),
\end{equation}
where the $(n+1)$-dimensional radial Laplace operator takes the form $\Delta_{n}:=\partial^{2}_{r} + \tfrac{n}{r} \partial_{r}$.   To see this, note that $r=(x^2_1 + x^2_2 + \dots + x_{n+1}^2)^{\frac{1}{2}}$ and $\partial_{x_{i}}\mathbf{u}(r) = \frac{x_i}{r} \mathbf{u}'(r)$ for each $i=1,2,\dots,n+1$. The spatial dimension $n$ can then be treated as a parameter of \eqref{R-DEqn}, where one might expect solutions of \eqref{R-DEqn} to depend continuously on $n$. Of course, while we can consider $n\in\mathbb{R}$ and find localised solutions of \eqref{R-DEqn}, these solutions only correspond to localised radial solutions of \eqref{R-DEqn:Gen} in the case when $n\in\mathbb{N}$. However, the dependence of each solution on the spatial dimension $n$ provides new insight into the connection between two, three, and $n$-dimensional patterns that would be extremely difficult to deduce using Cartesian coordinates.

Our present work can be seen as the extension of two previous mathematical studies of localised radial patterns. In 2009, Lloyd and Sandstede \cite{lloyd2009localized} considered localised solutions the quadratic-cubic Swift--Hohenberg equation
\begin{equation}\label{eqn:SH}
    0 = -(1+\Delta_n)^2 u - \mu u + \nu u^2 - u^3,
\end{equation}
for $n=1$, where $\nu\in\mathbb{R}$ is a fixed parameter. Using the radial spatial dynamics theory of Scheel \cite{scheel2003radially}, the authors proved the existence of a localised spot $u(r) = u_{A}(r)$ when $\nu\neq0$---which we call spot A---and two localised ring patterns $u(r) = \pm u_{R}(r)$ when $|\nu|>\sqrt{\frac{27}{38}}$. These solutions have the asymptotic profile
\begin{equation*}
    u_{A}(r) = \frac{C_{A}}{\nu} \mu^{\frac{1}{2}}J_{0}(r) + \mathcal{O}(\mu), \qquad  u_{R}(r) = C_{R} \mu^{\frac{3}{4}} r J_{1}(r) + \mathcal{O}(\mu),
\end{equation*}
for $r\in[0,r_0]$, where $C_{A}, C_{R}, r_0>0$ are fixed constants and $J_{\ell}(r)$ is the $\ell^{\mathrm{th}}$ order Bessel function of the first kind. Finally, the authors proved that the spot A solution undergoes a fold bifurcation along a curve $\nu \sim \mu^{\frac{1}{4}}|\log(\mu)|^{\frac{1}{2}}$.

In 2013, McCalla and Sandstede \cite{mccalla2013spots} introduced novel geometric blow-up coordinates to extend the approach of \cite{lloyd2009localized} and prove the existence of a second spot solution $u(r) = u_{B}(r)$ when $|\nu|>\sqrt{\frac{27}{38}}$, which we call spot B. This solution has the asymptotic profile
\begin{equation*}
    u_{B}(r) = -\mathrm{sgn}(\nu) \frac{C_{B}}{|\nu|^{\frac{1}{2}}} \mu^{\frac{3}{8}}J_{0}(r) + \mathcal{O}(\mu^{\frac{1}{2}})
\end{equation*}
for $r\in[0,r_0]$, with fixed constant $C_{B}>0$. The authors then applied their approach to \eqref{eqn:SH} for $n=2$ in order to prove the existence of three-dimensional localised spot A and B solutions. In this case, the solutions have the asymptotic profile
\begin{equation*}
  u_{A}(r) = \frac{C_{A}}{\nu} \mu^{\frac{1}{2}} \frac{\sin(r)}{r} + \mathcal{O}(\mu),  \qquad u_{B}(r) = -\mathrm{sgn}(\nu) \frac{C_{B}}{|\nu|^{\frac{1}{2}}} \mu^{\frac{1}{4}}\frac{\sin(r)}{r} + \mathcal{O}(\mu^{\frac{1}{2}})
\end{equation*}
for $r\in[0,r_0]$, where we note that the fixed constants $C_{A},C_{B}>0$ are not necessarily the same as the $n=1$ case. A formal scaling argument led the authors to conjecture that spot B solutions of \eqref{eqn:SH} will possess a  scaling of $\mu^{\frac{4-n}{8}}$ for a general dimension $n>0$.\footnote{We note that there is a discrepancy between this asymptotic scaling and the one stated in \cite{mccalla2013spots}, as the authors therein consider a spatial domain $\mathbb{R}^{n}$ rather than $\mathbb{R}^{n+1}$.} While the existence of three-dimensional ring solutions to \eqref{eqn:SH} should also follow from the approach presented in \cite{mccalla2013spots}, an existence proof has never been completed, and so the existence of three-dimensional rings remains an open problem.   Recent studies have extended these approaches to fully localised patterns with dihedral symmetry via a Galerkin approximation \cite{hill2023approximate,hill2024dihedral}. These works consider the same general class of reaction--diffusion systems presented in this work, and involve performing radial spatial dynamics on a large system of differential equations. While the analysis in these studies is more complicated than the axisymmetric case, the leading order profiles for localised dihedral patterns are still made up of Bessel functions $J_{n}(r)$, and so they represent a natural extension of the results of \cite{lloyd2009localized,mccalla2013spots}. In contrast, we have seen that changing from two to three dimensions results in different leading order profiles, and so our present study represents a more conceptual problem to overcome.

\begin{figure}[t!]
    \centering
    \includegraphics[width=\linewidth]{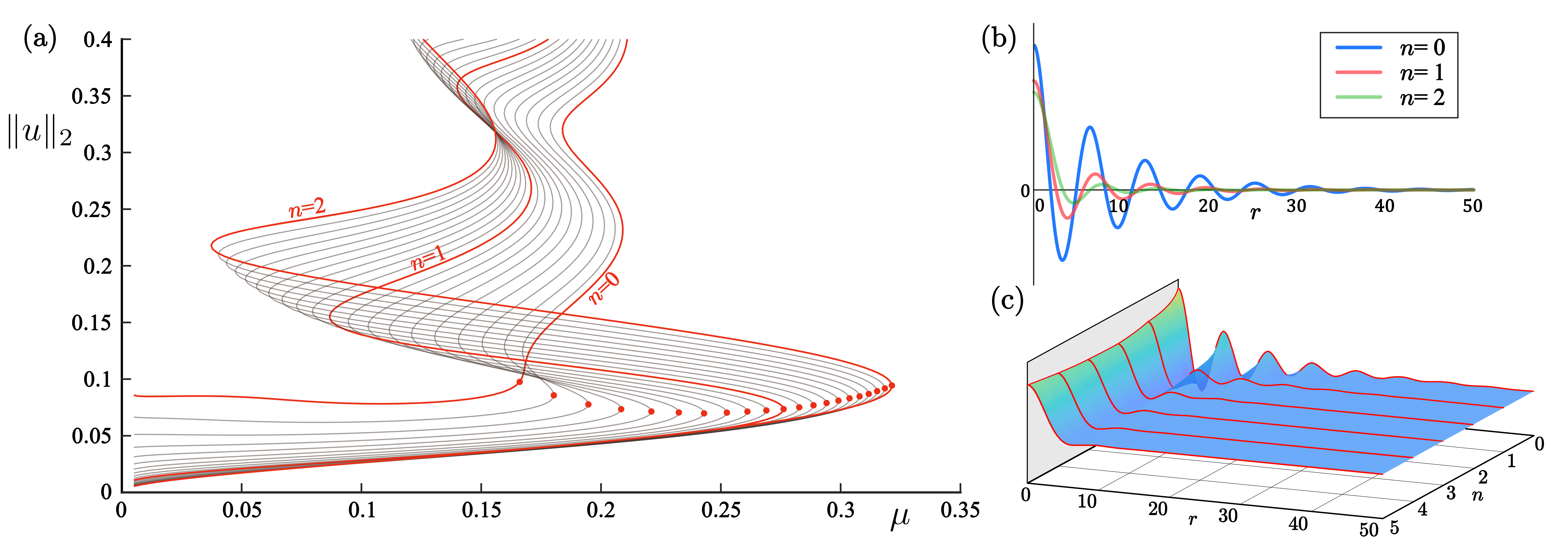}
    \caption{Spot A solutions to \eqref{eqn:SH} are computed numerically for $\nu = 1.6$ fixed and $\mu, n$ varying. (a) Bifurcation curves of Spot A patterns are plotted with respect to $\mu$ for $0\leq n\leq2$, where each curve undergoes a fold bifurcation (red spot) and before exhibiting snaking behaviour. Radial profiles of numerical solutions to \eqref{eqn:SH} at $\mu = 0.05$ are plotted for (b) $n=0,1,2$, and (c) $0<n<5$. The red lines in panel (c) represent integer values of $n$.}
    \label{fig:nSpot}
\end{figure}

Our investigation is also motivated by numerical studies of localised radial patterns. Using numerical continuation codes---such as those available at \cite{Hill2021Github}---one can solve \eqref{eqn:SH} to find localised spot A patterns for fixed values of $n$, $\nu$ and track their bifurcation curves as $\mu$ varies; see Figure~\ref{fig:nSpot}(a). We observe that the bifurcation curves in Figure~\ref{fig:nSpot}(a) continuously vary as the parameter $n$ changes; each branch grows until it reaches a fold bifurcation and then begins to exhibit snaking behaviour. As $n$ increases, the fold bifurcation becomes more pronounced, and the snaking oscillations become more distorted.

We emphasise here that the parameter $n$ does not just affect the radial profile of each localised pattern, but may also affect the asymptotic scaling of the pattern as it bifurcates from $\mu=0$. Furthermore, while there already exists a prediction for the change in the asymptotic scaling of the spot B solution, the transition between leading order radial functions (i.e. $J_0(r)$ when $n=1$ and $\frac{\sin(r)}{r}$ for $n=2$) is less clear, as shown in Figure~\ref{fig:nSpot}(b-c). The aim of this present work is to extend the approach of \cite{lloyd2009localized,mccalla2013spots} to prove the existence of $(n+1)$-dimensional localised radial solutions of \eqref{R-DEqn} near a Turing instability in a systematic fashion, such that the entire approach depends continuously on the dimension parameter $n$. We present a novel extension of standard techniques from radial spatial dynamics in order to gain a deeper understanding of the emergence of localised patterns in $(n+1)$ spatial dimensions. As a result of this approach, we not only prove the existence of spot A, spot B and ring patterns in \eqref{R-DEqn} for certain values of $n>0$, but we also determine the explicit $n$-dependence of the core profile for each pattern.

The paper is organised as follows. We begin by presenting our main results in \S\,\ref{s:results}, establishing the required hypotheses for \eqref{R-DEqn} to undergo a non-degenerate Turing bifurcation and stating our results for the existence of $(n+1)$-dimensional spot and rings patterns. In \S\,\ref{s:Spatial_Dynamics} we construct the core (\S\,\ref{subsec:Core}) and far-field (\S\,\ref{subsec:Far}-\ref{subsec:Transition}) manifolds that localised radial solutions of \eqref{R-DEqn} must intersect. We then employ asymptotic matching to identify intersections between the core and far-field manifolds (\S\,\ref{subsec:Matching}), proving our main result for stationary localised radial patterns in systems of the form \eqref{R-DEqn:Gen} near a Turing bifurcation. Finally, we conclude in \S\,\ref{s:discussion} with a discussion of our results and future directions of study.

\section{Main Results}\label{s:results}

In this paper, we assume that $n\in\mathbb{R}$, with $n>0$, and seek localised solutions to \eqref{R-DEqn}. Throughout we assume $\mathbf{D}(\mu)\in\mathbb{R}^{2\times2}$ is an invertible matrix for all $\mu$ in some open interval $\mathcal{I}\subset\mathbb{R}$, and $\mathbf{f}\in C^{k}\big(\mathbb{R}^{2}\times\mathbb{R},\mathbb{R}^{2}\big)$ for $k \geq 3$. We assume that there exists some $\mathbf{u}^*\in\mathbb{R}^{2}$ with $\mathbf{f}(\mathbf{u}^*, \mu) = \mathbf{0}$ for all $\mu\in\mathcal{I}$, such that $\mathbf{u}(r)\equiv\mathbf{u}^{*}$ is a uniform equilibrium for all $\mu \in \mathcal{I}$. Then, we restrict $\mu$ to a neighbourhood of $\mu^*$ in $\mathcal{I}$ and apply $\mathbf{D}(\mu)^{-1}$ to \eqref{R-DEqn} to normalise the diffusive term $\Delta_{n}\mathbf{u}$. 

 We proceed by writing $\mathbf{D}(\mu)^{-1}\mathbf{f}(\mathbf{u},\mu)$ as a Taylor expansion about a fixed point $(\mathbf{u},\mu) = (\mathbf{u}^*,\mu^*)$, with $\mu^*\in \mathcal{I}$, so that \eqref{R-DEqn} becomes
\begin{equation}\label{eqn:R-D}
    \mathbf{0} = \Delta_{n}\mathbf{u} - \mathbf{M}_{1}\mathbf{u} - \mu \mathbf{M}_{2}\mathbf{u} - \mathbf{Q}(\mathbf{u},\mathbf{u}) - \mathbf{C}(\mathbf{u},\mathbf{u},\mathbf{u}) + \mathcal{O}([|\mu| + |\mathbf{u}|]|\mu||\mathbf{u}| + |\mathbf{u}|^4).
\end{equation}
Here we have defined $\mathbf{M}_1 := \mathbf{D}(\mu^{*})^{-1}D_{\mathbf{u}}\big[\mathbf{f}(\mathbf{u}^*,\mu^*)\big]$ and introduced  $\mathbf{M}_2 \in\mathbb{R}^{2\times 2}$, $\mathbf{Q}:\mathbb{R}^{2}\times\mathbb{R}^{2}\to\mathbb{R}^{2}$ and $\mathbf{C}:\mathbb{R}^{2}\times\mathbb{R}^{2}\times\mathbb{R}^{2}\to\mathbb{R}^{2}$, where $\mathbf{Q}$ and $\mathbf{C}$ are symmetric bilinear and trilinear maps, respectively.   We remove the remainder terms from the rest of our analysis, as they do not affect the overall results.

  The following hypothesis establishes that \eqref{eqn:R-D}, and thus \eqref{R-DEqn}, undergoes a nondegenerate Turing instability from the uniform equilibrium $\mathbf{u} = \mathbf{u}^*$ at the point $\mu=\mu^*$.

\begin{hyp}\label{R-D:hyp} 
\,
\begin{enumerate}[label=(\roman*)]
    \item \textit{(Turing Instability)} We assume that $\mathbf{M}_{1}$ satisfies the following condition:
	\begin{equation*}
    		\det\,\big(\mathbf{M}_{1} + k_{c}^2\mathbbm{1}_2\big) = 0 
	\end{equation*}
	for some fixed $k_{c}\in\mathbb{R}^{+}$,   where $\mathbbm{1}_{2}$ is the two-dimensional identity matrix. Furthermore, the eigenvalue $\lambda=-k_{c}^{2}$ of $\mathbf{M}_{1}$ is algebraically double and geometrically simple with generalised eigenvectors $\hat U_0, \hat U_1 \in \mathbb{R}^2$, defined so that
	\begin{equation}
	    \bigg(\mathbf{M}_{1} + k_{c}^{2} \mathbbm{1}_{2}\bigg)\hat U_{0} = \mathbf{0}, \qquad \bigg(\mathbf{M}_{1} + k_{c}^{2} \mathbbm{1}_{2}\bigg)\hat U_{1} = k_{c}^{2} \hat U_0, \qquad \langle \hat U_i^*, \hat U_j\rangle_2 = \delta_{i,j},\nonumber
	\end{equation}
	for each $i,j\in\{0,1\}$, where $\hat U_0^*$, $\hat U_1^*$ are the respective adjoint vectors for $\hat U_0$, $\hat U_1$.
 \item \textit{(Linear non-degeneracy condition)} We assume that $\mathbf{M}_2$ satisfies the following condition:
	\begin{equation}\label{c0}
 \begin{split}
    c_{0}:={}& \frac{1}{4}\big\langle \hat U_1^*, -\mathbf{M}_2 \hat U_0\big\rangle_2 \neq 0.
      \end{split}
	\end{equation}
 \item \textit{(Quadratic non-degeneracy condition)} We assume that $\mathbf{Q}$ satisfies the following condition:
	\begin{equation}\label{gamma}
 \begin{split}
    \gamma:=\big\langle \hat U_1^*, \mathbf{Q}_{0,0}\big\rangle_2 \neq 0,\\
      \end{split}
	\end{equation}
 where $\mathbf{Q}_{i,j}:=\mathbf{Q}(\hat{U}_{i},\hat{U}_{j})$.
 \item \textit{(Cubic non-degeneracy condition)} We assume that $\mathbf{Q}$ and $\mathbf{C}$ satisfy the following condition:
	\begin{equation}\label{c3}
 \begin{split}
      c_3 :=&-\bigg[\bigg(\frac{5}{6}\big[\big\langle \hat{U}_{0}^{*},\mathbf{Q}_{0,0}\big\rangle_{2} + \big\langle \hat{U}_{1}^{*},\mathbf{Q}_{0,1}\big\rangle_{2}\big] + \frac{19}{18}\big\langle \hat{U}_{1}^{*},\mathbf{Q}_{0,0}\big\rangle_{2} \bigg)\big\langle \hat{U}_{1}^{*},\mathbf{Q}_{0,0}\big\rangle_{2} +\frac{3}{4}\big\langle \hat{U}_{1}^{*}, \mathbf{C}_{0,0,0}\big\rangle_{2}\bigg] \neq 0,
      \end{split}
	\end{equation}
	where $\mathbf{Q}_{i,j}:=\mathbf{Q}(\hat{U}_{i},\hat{U}_{j})$ and $\mathbf{C}_{i,j,k}:=\mathbf{C}(\hat{U}_{i},\hat{U}_{j},\hat{U}_{k})$.
\end{enumerate}	
\end{hyp}
We briefly remark on the above Hypothesis.
\begin{rmk}\,
\begin{enumerate}[label=(\roman*)]
    \item Hypothesis~\ref{R-D:hyp} $(i)$ gives that the linearisation of the right-hand-side of \eqref{R-DEqn:Gen} about $(\mathbf{u},\mu) = (\mathbf{u}^*,\mu^*)$ has a zero eigenvalue, giving way to a Turing bifurcation. We fix $k_{c}=1$ without loss of generality, the wavenumber $k_c$ can be reintroduced into our solutions through a coordinate transformation.
    \item The constant $c_0$ in Hypothesis~\ref{R-D:hyp} $(ii)$ determines the direction of bifurcation for localised radial patterns; localised patterns emerge for $\mu>0$ when $c_0>0$, and for $\mu<0$ when $c_0<0$. We assume that $c_0>0$, since we can otherwise define $\tilde{\mu}:=-\mu$ and $\tilde{c}_0:=-c_0>0$ so that localised patterns emerge for $\tilde{\mu}>0$.
    \item The non-degeneracy conditions in Hypothesis~\ref{R-D:hyp} $(iii)$, $(iv)$ are sufficient, but not necessary, conditions for localised patterns to emerge. Spot A patterns require $\gamma\neq0$ but not $c_3\neq0$, ring patterns require $c_3\neq0$ but not $\gamma\neq0$, while spot B patterns require both $\gamma\neq0$ and $c_3\neq0$.
\end{enumerate}
\end{rmk}

\begin{rmk}
    We note that the quadratic-cubic Swift--Hohenberg equation \eqref{eqn:SH} is contained within the framework of \eqref{eqn:R-D}, where $\mathbf{u} = (u,(1+\Delta_{n})u)^{T}$, $\hat{U}_{0}=(1,0)^{T}$, $\hat{U}_{1}=(0,1)^{T}$, and
    \begin{equation*}
        \mathbf{Q}(\mathbf{u},\mathbf{u}) = \begin{pmatrix}
            0 \\ \nu u^2
        \end{pmatrix},\qquad \mathbf{C}(\mathbf{u},\mathbf{u},\mathbf{u}) = \begin{pmatrix}
            0 \\ -u^3
        \end{pmatrix},\qquad \mathbf{M}_{1} = \begin{pmatrix}
            -1 & 1 \\ 0 & -1
        \end{pmatrix}, \qquad \mathbf{M}_{2} = \begin{pmatrix}
            0 & 0 \\ -1 & 0
        \end{pmatrix}.
    \end{equation*}
    Then, $c_0 = \frac{1}{4}$, $\gamma = \nu$ and $c_3 = \frac{3}{4}-\frac{19\nu^2}{18}$.
\end{rmk}

Following the approach of \cite{lloyd2009localized,mccalla2013spots}, we employ tools from the radial spatial dynamics theory of Scheel \cite{scheel2003radially} to prove the existence of localised radial solutions to \eqref{eqn:R-D}. To do this, we construct two local invariant manifolds---the `core manifold' $\mathcal{W}^{cu}_{-}(\mu)$ and the `far-field manifold' $\mathcal{W}^{s}_{+}(\mu)$---over local intervals $r\in[0,r_0]$ and $r\in[r_0,\infty)$, respectively, where the point $r_0>0$ is chosen to be large. The core manifold consists of all small-amplitude solutions of \eqref{eqn:R-D} that remain bounded as $r\to0^+$, while the far-field manifold consists of all small-amplitude solutions of \eqref{eqn:R-D} that decay exponentially fast as $r\to\infty$. After constructing each manifold, we use asymptotic matching to find trajectories that lie on the intersection of both manifolds. These trajectories consist of solutions of \eqref{eqn:R-D} that are bounded for all $r\in[0,\infty)$ and decay exponentially fast as $r\to\infty$, thus defining localised radial solutions to \eqref{eqn:R-D}.

To make use of the radial structure of \eqref{R-DEqn}, we introduce the following objects that will be useful when considering $(n+1)$-dimensional radial functions.
\begin{defn}
    We define the \underline{$k$-index Bessel operator} $\mathcal{D}_{k}$ to be the nonautonomous differential operator
    \begin{equation}\label{Dk}
        \mathcal{D}_{k}: u(r) \mapsto r^{-k}\,\partial_{r}\, (r^k u(r)) = \left(\partial_r + \tfrac{k}{r}\right)u(r) 
    \end{equation}
    for any $k\in\mathbb{R}$ (see \cite{groves2024function}) and we note that the following identities
    \begin{equation}\label{Dk:ident}
        \mathcal{D}_{i}\mathcal{D}_{j} = \mathcal{D}_{j+1}\mathcal{D}_{i-1}, \qquad\qquad \mathcal{D}_{i}(r^{{j}}u(r)) = r^{j}\mathcal{D}_{i+j}u(r),\qquad\qquad \Delta_{n} = \mathcal{D}_{n}\mathcal{D}_{0},
    \end{equation}
    hold for all $i,j\in\mathbb{R}$, where $\Delta_n$ is the $(n+1)$-dimensional Laplace operator.
    
    Additionally, we define the \underline{$(n+1)$-dimensional Bessel functions} of the first and second kind to be
\begin{equation}\label{nBessel}
    J^{(n)}_\ell(r) := 2^{(\frac{n-1}{2})}\Gamma\left(\frac{n+1}{2}\right)r^{-(\frac{n-1}{2})}J_{\ell+\frac{n-1}{2}}(r),\qquad Y^{(n)}_\ell(r) := 2^{(\frac{n-1}{2})}\Gamma\left(\frac{n+1}{2}\right)r^{-(\frac{n-1}{2})}Y_{\ell+\frac{n-1}{2}}(r),
\end{equation}
    respectively, where $J_{\nu}(r)$, $Y_{\nu}(r)$ are the respective $\nu$-th order Bessel functions of the first and second kind.
\end{defn}
The Bessel operators $\mathcal{D}_{k}$, with $k\in\mathbb{Z}$, were recently introduced in \cite{groves2024function} as the natural differential operators for planar functions of the form $\mathrm{e}^{\mathrm{i}k\theta}f_{k}(r)$. While the theory presented in \cite{groves2024function} is restricted to functions of two-dimensional polar (or three-dimensional cylindrical) coordinates, the Bessel operators are also helpful in our study of radial functions in higher spatial dimensions.

The $(n+1)$-dimensional Bessel functions (also known as \textit{hyperspherical Bessel functions}) $J_{\ell}^{(n)}(r),Y_{\ell}^{(n)}(r)$ were previously introduced in \cite{Wen1985Hyperspherical}, and have recently been employed in the study of pattern formation in nonlocal transport models for cell interaction \cite{Jewell2023patterning}; see also \S\,10 in \cite{avery2006generalized} for more information. We briefly note some important properties of these functions.

\begin{rmk}
The $(n+1)$-dimensional Bessel functions $Z^{(n)}_{\ell}(r) \in \{J^{(n)}_\ell(r), Y^{(n)}_\ell(r)\}$ are defined so that
\begin{enumerate}[label=(\roman*)]
    \item $J^{(n)}_{0}(0) = 1$ for all $n>0$,
\item $\mathcal{D}_{n-1+\ell}\, Z_{\ell}^{(n)}(r) = Z_{\ell-1}^{(n)}$,  $\mathcal{D}_{-\ell}\, Z_{\ell}^{(n)}(r) = -Z_{\ell+1}^{(n)}$ for all $n,\ell\in\mathbb{R}$,
\item $J_{\ell}^{(0)}(r) = \cos(r)$, $Y_{\ell}^{(0)}(r) = \sin(r)$, $J_{\ell}^{(1)}(r) = J_{\ell}(r)$, $Y_{\ell}^{(1)}(r) = Y_{\ell}(r)$, and $J_{\ell}^{(2)}(r) = j_{\ell}(r)$, $Y_{\ell}^{(2)}(r) = y_{\ell}(r)$, where $j_{\ell}(r), y_{\ell}(r)$ are $\ell$-th order spherical Bessel functions of the first and second kind, respectively.
\end{enumerate}
Furthermore, the $(n+1)$-dimensional Bessel functions satisfy the following generalised form of Bessel's equation
\begin{equation*}
    (\mathcal{D}_{n+\ell}\mathcal{D}_{-\ell}+1)r^{\alpha}Z_{\ell+\alpha}^{(n)}(r) = 2\alpha \, r^{\alpha-1}\,Z_{\ell+\alpha-1}^{(n)}(r),
\end{equation*}
for any $n,\ell,\alpha\in\mathbb{R}$.
\end{rmk}
The functions $\{J_{0}^{(n)}(r), rJ_{1}^{(n)}(r), Y_{0}^{(n)}(r), rY_{1}^{(n)}(r)\}$ thus form a linearly independent set of solutions to the equation $(\Delta_{n} + 1)^2 u = 0$, which corresponds to the linearisation of \eqref{eqn:R-D} at $\mu=0$. In Lemma~\ref{Lemma:Core} we use a variation-of-constants formula with this set of linear solutions in order to construct core manifold $\mathcal{W}^{cu}_{-}(\mu)$ on a bounded interval $r\in[0,r_0]$. We emphasise that our use of this set of functions is not an assumption, but rather an enforced choice based on the structure of the linear operator $(\Delta_{n} - \mathbf{M}_{1})\mathbf{u}=\mathbf{0}$.

In order to construct the far-field manifold $\mathcal{W}^{s}_{+}(\mu)$, we introduce complex coordinates $A,B$ so that the system \eqref{eqn:R-D} is transformed into the radial normal form for an $(n+1)$-dimensional Hamiltonian--Hopf bifurcation. Upon introducing the far-field radial coordinate $s:=\mu^{\frac{1}{2}}r$, exponentially decaying solutions are then found to be $\mathcal{O}(\mu^{\frac{1}{2}})$-perturbations of the non-autonomous cubic Ginzburg--Landau equation
\begin{equation}\label{GL:intro}
        \left(\frac{\mathrm{d}}{\mathrm{d}s} + \frac{n}{2s}\right)^{2} A = c_0 A + c_3 |A|^2 A, \qquad A\in\mathbb{C},\qquad s\in[0,\infty),
    \end{equation}
where $c_0$ and $c_3$ are defined in \eqref{c0} and \eqref{c3}, respectively. Localised ring and spot B solutions require the existence of a nontrivial forward-bounded localised solution to \eqref{GL:intro}, and so the authors of  \cite{lloyd2009localized,mccalla2013spots} employed the following hypothesis.
\begin{hyp}\label{GL:hyp}
    Fix $n>0$. The equation
    \begin{equation}\label{GL:real;normal}
        \left(\frac{\mathrm{d}}{\mathrm{d}s} + \frac{n}{2s}\right)^{2} A = A - A^3, \qquad A\in\mathbb{R},\qquad s\in[0,\infty)
    \end{equation}
    has a forward-bounded nontrivial localised solution $A(s) = q(s)$. In addition, the linearisation of \eqref{GL:real;normal} about $q(s)$ possesses no nontrivial solutions that are forward-bounded on $[0,\infty)$.
\end{hyp}
It is straightforward to prove this hypothesis when $n=2$ as the differential operator becomes the three-dimensional radial Laplacian and one can employ theory from the vast literature on elliptic PDEs (see Lemma 4.3 in \cite{mccalla2013spots} and the references therein). However, it is not clear that this result holds for any other values of $n>0$, this hypothesis was eventually proven in the case when $n=1$ by van den Berg et al.\ in \cite{vandenberg2015Rigorous} via a computer-assisted proof. Based on the expected asymptotic scaling $|\mathbf{u}|\sim\mu^{\frac{4-n}{8}}$ for spot B solutions in \cite{mccalla2013spots}, as well as a similar prediction of $|\mathbf{u}|\sim\mu^{\frac{4-n}{4}}$ for ring solutions, we only consider solutions to \eqref{GL:intro} for $0<n<4$. We will observe that this is required in order to ensure certain terms are asymptotically small in our subsequent analysis.

In this work, we present a simplified version of Hypothesis~\ref{GL:hyp} which can be proven for any choice of $0<n<3$. Our proof relies on the following proposition which can be proven using standard elliptic PDE theory.
\begin{prop}\label{prop:GL-Laplace}
    The equation
    \begin{equation}\label{GL:real;Laplace}
        \Delta u = u - |\mathbf{x}|^{2-n} u^3, \qquad u\in\mathbb{R},\qquad \mathbf{x}\in\mathbb{R}^{3}
    \end{equation}
    has a positive radial ground state solution $u(\mathbf{x}) = Q(|\mathbf{x}|)$ for any $0<n<3$. In addition, the linearisation of \eqref{GL:real;Laplace} about $Q(|\mathbf{x}|)$ 
    \begin{equation}\label{GL:real;Laplace-lin}
        \mathcal{L}\,v := \left(-\Delta + 1 - 3|\mathbf{x}|^{2-n} Q(|\mathbf{x}|)^2 \right) v = 0, \qquad v\in\mathbb{R},\qquad \mathbf{x}\in\mathbb{R}^{3}
    \end{equation}
    possesses no nontrivial radial solutions that are bounded uniformly on $[0,\infty)$.
\end{prop}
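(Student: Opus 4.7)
The two conclusions of Proposition~\ref{prop:GL-Laplace} are naturally treated separately: existence of the positive radial ground state $Q$ by variational methods on $H^1_{\mathrm{rad}}(\mathbb{R}^3)$, and triviality of the bounded radial kernel of $\mathcal{L}$ by reducing to a Sturm/Wronskian analysis of the associated scalar ODE.

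For existence, equation~\eqref{GL:real;Laplace} is the Euler--Lagrange equation of the energy
$$E(u) = \tfrac{1}{2}\int_{\mathbb{R}^3}\bigl(|\nabla u|^2 + u^2\bigr)\,\mathrm{d}x - \tfrac{1}{4}\int_{\mathbb{R}^3}|\mathbf{x}|^{2-n}u^4\,\mathrm{d}x.$$
For $0<n<3$ the weight exponent $2-n$ lies strictly above the Hardy-critical threshold $-1$, so a weighted Hardy--Sobolev inequality together with the Strauss radial decay estimate $|u(r)| \lesssim r^{-1}\|u\|_{H^1}$ gives a compact embedding of $H^1_{\mathrm{rad}}(\mathbb{R}^3)$ into $L^4(\mathbb{R}^3,|\mathbf{x}|^{2-n}\mathrm{d}x)$. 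Minimising $E$ on the Nehari manifold in $H^1_{\mathrm{rad}}$ then produces a nontrivial radial critical point; Schwarz symmetrisation makes it radially decreasing and the strong maximum principle for $-\Delta+1$ makes it strictly positive. Standard elliptic bootstrap yields $Q \in C^2(\mathbb{R}^3\setminus\{0\})$, and comparison against the Yukawa Green's function gives $Q(r),|Q'(r)| = O(\mathrm{e}^{-r})$ as $r \to \infty$.

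For nondegeneracy, I would reduce to a scalar ODE via $v(r) = w(r)/r$, which converts $\mathcal{L}v = 0$ into
$$w'' = \bigl(1 - 3r^{2-n}Q(r)^2\bigr)\,w, \qquad w(0) = 0,$$
with $w$ at most linearly growing at infinity; the exponential decay of $Q$ then forces $w = O(\mathrm{e}^{-r})$. Simultaneously, $W(r) := rQ(r)$ satisfies $W'' = (1 - r^{2-n}Q^2)\,W$ with $W(0)=0$, $W > 0$ on $(0,\infty)$, and $W = O(\mathrm{e}^{-r})$. Taking the Wronskian of the two equations gives
$$\bigl(wW' - Ww'\bigr)' = 2\,r^{2-n}Q(r)^2\,wW.$$
Integrating over $[0,\infty)$ immediately excludes the sign-preserving case: the boundary contributions vanish while the right-hand side has definite sign. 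The sign-changing case I would treat by a Kwong-type uniqueness argument, combining Sturm oscillation with the strict radial monotonicity of $Q$ and the positive weight $r^{2-n}$ to show that the second radial eigenvalue of $\mathcal{L}$ is strictly positive, so that zero is absent from the radial spectrum.

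The main obstacle is the sign-changing case of the nondegeneracy. The Wronskian identity handles only the sign-preserving case cleanly; excluding a nontrivial $w$ with a single interior zero requires the full Kwong-type machinery adapted to the weighted setting, and this is where the restriction $0<n<3$ becomes essential. At $n = 3$ the weight is Hardy-critical and destroys both the variational compactness and the sign structure of the Wronskian forcing term near the origin, while for $n$ near $0$ the weight $r^{2-n}$ grows so fast at infinity that $H^1_{\mathrm{rad}}(\mathbb{R}^3)$ ceases to be the natural variational space. Within $0 < n < 3$, the existence and nondegeneracy statements are the natural analogues of the autonomous ($n = 2$) case established by Berestycki--Lions and Kwong.
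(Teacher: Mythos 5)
Your existence argument follows the same route as the paper (standard variational methods in the radial class; the paper simply cites Genoud for $2<n<3$ and Ding--Ni for $0<n\leq 2$), and is fine modulo one wrinkle: for $0<n<2$ the weight $|\mathbf{x}|^{2-n}$ is radially \emph{increasing}, so Schwarz symmetrisation does not obviously decrease the energy; positivity should instead be obtained by replacing a Nehari minimiser $u$ with $|u|$ and invoking the strong maximum principle for $-\Delta+1$.

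The genuine gap is in the nondegeneracy. Your Wronskian with $W(r)=rQ(r)$, which exploits $\mathcal{L}Q=-2|\mathbf{x}|^{2-n}Q^3$, only rules out a sign-preserving bounded zero mode, and you explicitly defer the sign-changing case to ``Kwong-type machinery adapted to the weighted setting.'' That is precisely the hard part: Kwong's uniqueness/oscillation apparatus is built for the autonomous equation, and it is not clear it transfers to the weight $r^{2-n}$ (indeed, proving positivity of the second radial eigenvalue of $\mathcal{L}$ is essentially equivalent to the nondegeneracy you are trying to establish). The paper closes this case by a different device: alongside $Q$ it uses the scaling generator $Q_1(r):=r\,\mathcal{D}_{\frac{4-n}{2}}Q(r)=rQ'(r)+\tfrac{4-n}{2}Q(r)$, which satisfies the second explicit identity $\mathcal{L}Q_1=-2Q$. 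Feeding the pair of identities $\mathcal{L}Q=-2|\mathbf{x}|^{2-n}Q^3$ and $\mathcal{L}Q_1=-2Q$ into the argument of Lemma~2.1 of \cite{Chang2008Spectra} yields a second Wronskian relation whose forcing term $2r^2 vQ$ has controllable sign, and the two relations together exclude \emph{every} nontrivial bounded radial zero mode, sign-changing or not, without any appeal to Kwong-type uniqueness. To complete your proof you would need either to supply this second identity (your ODE reduction $v=w/r$ is compatible with it) or to actually carry out the weighted Kwong analysis, which you have not done.
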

\begin{proof}
      Existence of a radial positive ground state follows from standard variational methods, see for example \cite{Lions1984concentration-compactness-1,Lions1984concentration-compactness-2,Stuart1980variational}. In particular, existence was proven for $2<n<3$ in \cite{Genoud_2008_Schrodinger}, and follows directly from \cite{Ding1986Existence} for $0<n\leq2$. The nonexistence of bounded solutions to \eqref{GL:real;Laplace-lin} follows from the proof of Lemma 2.1 in \cite{Chang2008Spectra}, where we use the fact that the two quantities $Q(r)$, $Q_1(r) := r \mathcal{D}_{\frac{4-n}{2}}Q(r)$ satisfy $\mathcal{L}Q(|\mathbf{x}|) = -2 |\mathbf{x}|^{2-n}Q(|\mathbf{x}|)^3$ and $\mathcal{L}Q_{1}(|\mathbf{x}|) = -2Q(|\mathbf{x}|)$.
\end{proof}

In Lemma~\ref{lem:GL} we show that Proposition~\ref{prop:GL-Laplace} implies Hypothesis~\ref{GL:hyp} and thus guarantees the existence of localised ring and spot B solutions to \eqref{eqn:R-D} for $0<n<3$.   We also show the existence of localised ring solutions for $3\leq n<4$, subject to the following hypothesis.
\begin{hyp}\label{hyp:GL-Laplace}
    Proposition~\ref{prop:GL-Laplace} also holds for any value $3\leq n<4$.
\end{hyp}
The standard techniques used to prove the existence of $H^1(\mathbb{R}^{3})$ (and consequently $C^2(\mathbb{R}^{3})$) radial solutions in Proposition~\ref{prop:GL-Laplace} break down when $n>3$. In fact, one can prove the nonexistence of $H^1(\mathbb{R}^{3})$ solutions to \eqref{GL:real;Laplace} in this regime. It may still be possible to obtain solutions in a different function space that guarantees the existence of localised ring solutions for $3\leq n <4$, however we leave this question to future study and note the following. For the remainder of this work, the restriction $0<n<4$ for the existence of ring and spot B solutions is to be understood as either $0<n<3$ or $3\leq n<4$ subject to Hypothesis~\ref{hyp:GL-Laplace}.

We present our key result in the following theorem.

\begin{thm}\label{thm:SmallPatch} 
	Fix $n>0$ and assume Hypotheses~\ref{R-D:hyp} (i-ii) hold. Then, there exist constants $\mu_0,r_0,r_1 > 0$ such that the system \eqref{eqn:R-D} has radially localised solutions for each $\mu\in(0,\mu_{0})$. Each respective solution decays at a rate $\mathcal{O}(r^{-\frac{n}{2}}\mathrm{e}^{-\sqrt{c_0\mu}\,r})$ as $r\to\infty$ and satisfies the following equations uniformly on bounded intervals $[0,r_0]$ as $\mu\to0^+$.
 \begin{enumerate}[label=(\roman*)]
     \item (Spot A) If Hypothesis~\ref{R-D:hyp} (iii) holds, then we obtain
     \begin{equation}\label{RadialProfile:spotA}
\begin{split}
    \mathbf{u}_{A}(r) ={}& (c_0 \mu)^{\frac{1}{2}}\left(\frac{\sqrt{\pi} }{\nu_{n}\gamma}\right)\frac{1}{2^{\frac{n}{2}}\Gamma(\frac{n+1}{2})} J_{0}^{(n)}(r)\hat{U}_0 + \mathcal{O}(\mu),\\
\end{split}
\end{equation} 
\item (Rings) If $n<4$ and Hypothesis~\ref{R-D:hyp} (iv) holds with $c_3<0$, then we obtain
\begin{equation}\label{RadialProfile:Rings}
\begin{split}
    \mathbf{u}_{R}(r) ={}& \pm (c_0 \mu)^{\frac{4-n}{4}} \left(\frac{2\sqrt{\pi}q_n}{\sqrt{|c_3|}}\right) \frac{1}{2^{\frac{n}{2}}\Gamma(\frac{n+1}{2})} \left[ r J_{1}^{(n)}(r)\hat{U}_0 + 2J_{0}^{(n)}(r)\hat{U}_1 \right] + \mathcal{O}(\mu^{\frac{6-n}{4}}+\mu^{\frac{4-n}{2}}),\\
\end{split}
\end{equation} 
\item (Spot B) If $n<4$ and Hypotheses~\ref{R-D:hyp} (iii-iv) hold with $c_3<0$, then we obtain
\begin{equation}\label{RadialProfile:spotB}
\begin{split}
    \mathbf{u}_{B}(r) ={}& -\mathrm{sgn}(\gamma) (c_0\mu)^{\frac{4-n}{8}} \left(\frac{\pi q_n}{\nu_n |\gamma|\sqrt{|c_3|}}\right)^{\frac{1}{2}} \frac{1}{2^{\frac{n-1}{2}}\Gamma(\frac{n+1}{2})} J_{0}^{(n)}(r)\hat{U}_0 + \mathcal{O}(\mu^{\frac{8-n}{8}}+\mu^{\frac{4-n}{4}}),\\
\end{split}
\end{equation} 
 \end{enumerate}
Here, $\gamma,c_0,c_3$ are defined in Hypothesis~\ref{R-D:hyp}, $q_n := \displaystyle\lim_{s\to0} Q(s)$, where $Q(|\mathbf{x}|)$ is the positive radial ground state solution of \eqref{GL:real;Laplace}, 
\begin{equation}\label{nu_n}
    \nu_{n}:= \left(\frac{3}{8}\right)^{\frac{n}{2}}\frac{\pi}{3\Gamma(\frac{n}{2})},
\end{equation}
and $J^{(n)}_{\ell}(r)$ is the $(n+1)$-dimensional $\ell^\mathrm{th}$ order Bessel function of the first kind defined in \eqref{nBessel}. 

Furthermore, for $c_3>0$, spot A solutions given by \eqref{RadialProfile:spotA} undergo a fold bifurcation along the curve
        \begin{equation}\label{Fold:curve}
            \gamma 
            = \pm (c_0 \mu)^{\frac{1}{4}} \left(\frac{(\mu^{-\frac{1}{2}}r_1)^{n-1} - r_0^{n-1}}{(n-1)(\mu^{-\frac{1}{2}} r_1)^{n-1}\,r_0^{n-1}}\right)^{\frac{1}{2}}\,\left(\frac{|c_3|^{\frac{1}{2}}}{\nu_{n}}\right)(1 + \mathrm{o}(1))
        \end{equation}
        as $\mu\to0^+$.
\end{thm}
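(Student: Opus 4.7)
The plan is to follow the radial spatial dynamics programme of Scheel combined with the geometric blow-up of McCalla--Sandstede, but with all computations organised through the Bessel operators $\mathcal{D}_k$ and the hyperspherical Bessel functions $J^{(n)}_\ell, Y^{(n)}_\ell$ so that $n\in\mathbb{R}_{>0}$ enters as a continuous parameter. First I would rewrite \eqref{eqn:R-D} as a first-order system in $r$ by introducing $\mathbf{v} := \mathcal{D}_0\mathbf{u}$ and $(\mathbf{p},\mathbf{q}) := ((\Delta_n+\mathbf{M}_1)\mathbf{u},\,\mathcal{D}_0(\Delta_n+\mathbf{M}_1)\mathbf{u})$, which diagonalises the linear part at $\mu=0$ into two Jordan blocks carrying the eigenspaces spanned by $\{J^{(n)}_0,\,rJ^{(n)}_1\}$ and $\{Y^{(n)}_0,\,rY^{(n)}_1\}$ (the latter singular as $r\to0^+$). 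Boundedness at the origin then selects the two-dimensional core subspace, and a variation-of-constants formula with kernel built from these four hyperspherical Bessel solutions yields, via contraction mapping on $[0,r_0]$, a four-dimensional core manifold $\mathcal{W}^{cu}_-(\mu)$ parametrised by the amplitudes $(d_0,d_1)$ of the $J^{(n)}_0\hat U_0$ and $rJ^{(n)}_1\hat U_0 + 2J^{(n)}_0\hat U_1$ modes together with $\mu$; the nonlinear corrections are $\mathcal{O}(|d|^2+\mu|d|)$ with explicit $n$-dependent coefficients coming from the inner products against the adjoint modes $\hat U_0^*,\hat U_1^*$.

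Next I would construct the far-field manifold on $[r_0,\infty)$ by passing to complex amplitudes $A,B$ via $\mathbf{u}\approx A(r)\mathrm{e}^{\mathrm{i}r}\hat U_0+\text{c.c.}+\cdots$, putting \eqref{eqn:R-D} into the $(n+1)$-dimensional radial Hamiltonian--Hopf normal form, and then rescaling $s=\sqrt{c_0\mu}\,r$ and $A=\sqrt{\mu}\,\tilde A$ to reach the asymptotic Ginzburg--Landau equation \eqref{GL:intro}. Exponentially decaying solutions of this rescaled system form a two-dimensional stable manifold $\mathcal{W}^{s}_+(\mu)$, whose leading profile is either the trivial solution $A\equiv0$ (giving decay at rate $\mathcal{O}(r^{-n/2}\mathrm{e}^{-\sqrt{c_0\mu}\,r})$ through the Bessel--like $s^{-n/2}\mathrm{e}^{-s}$ asymptotics) or the nontrivial ground state $q(s)$ obtained from Hypothesis~\ref{GL:hyp}. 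Here the reduction of Hypothesis~\ref{GL:hyp} to Proposition~\ref{prop:GL-Laplace} is the substitution $A(s)=|c_3/c_0|^{-1/2}s^{(2-n)/2}U(s)$, which turns the ODE into the radial form of \eqref{GL:real;Laplace} in $\mathbb{R}^3$; the condition $0<n<4$ is exactly what makes the weight $s^{2-n}$ integrable at infinity for the variational argument. A separate transition chart on the intermediate scale $r\sim\mu^{-1/2}$ then matches the Bessel oscillations of the core with the modulated normal form in the far field.

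The heart of the argument is the asymptotic matching at $r=r_0$: expanding both $\mathcal{W}^{cu}_-(\mu)$ and $\mathcal{W}^{s}_+(\mu)$ in the shared coordinates $(d_0,d_1,\mu)$ and demanding their intersection produces a reduced bifurcation system. Using the leading-order identities $J^{(n)}_0(r)\sim\nu_n^{1/2}\cos(r-\phi_n)/r^{n/2}$ for large $r$ (with $\nu_n$ as in \eqref{nu_n}) and the analogous Bessel asymptotics together with the symplectic matching lemma, the reduced system takes the schematic form $\gamma d_0^2+d_1=\mathcal{O}(\cdots)$ and $c_0\mu+c_3 d_0^2=\mathcal{O}(\cdots)$, whose three branches of small solutions yield the three families: (i) the spot A branch with $d_0\sim\mu^{1/2}/\gamma$ matched to the trivial far-field solution, giving \eqref{RadialProfile:spotA}; (ii) the ring branch with $d_1\sim\mu^{(4-n)/4}$ matched to $q(s)$, with $d_0$ one order smaller, giving \eqref{RadialProfile:Rings}; and (iii) the spot B branch with $d_0\sim\mu^{(4-n)/8}$ arising from the quadratic balance $\gamma d_0^2\sim d_1\sim\mu^{(4-n)/4}$, yielding \eqref{RadialProfile:spotB}. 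The explicit constants in the theorem come from tracking the normalisation factors $2^{(n-1)/2}\Gamma(\tfrac{n+1}{2})$ in \eqref{nBessel} and $\nu_n$ through the matching.

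Finally, the fold curve \eqref{Fold:curve} is obtained by continuing the spot A expansion to next order when $c_3>0$: the cubic self-interaction feeds back into the $d_1$ equation through an integral of the form $\int_{r_0}^{\mu^{-1/2}r_1}r^{-n}\,\mathrm{d}r = ((\mu^{-1/2}r_1)^{1-n}-r_0^{1-n})/(1-n)$ on the transition region, which when equated with the leading quadratic term $\gamma^2 d_0^2/c_0$ produces the stated balance $\gamma^2\sim (c_0\mu)^{1/2}|c_3|\nu_n^{-2}\,[\,(\mu^{-1/2}r_1)^{n-1}-r_0^{n-1}\,]/[(n-1)(\mu^{-1/2}r_1)^{n-1}r_0^{n-1}]$; nondegeneracy of the quadratic balance gives the turning point and completes the fold. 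I expect the main obstacle to be the transition chart: keeping the inhomogeneous corrections from the $\tfrac{n}{r}\partial_r$ term uniformly controlled as $n$ varies continuously across $n=1$ (where the integral above becomes logarithmic) requires careful choice of the intermediate scale $r_1$ and weighted norms adapted to the $(n+1)$-dimensional Bessel asymptotics, whereas the dihedral-type inner-product bookkeeping with $\Gamma(\tfrac{n+1}{2})$ factors, while tedious, is routine.
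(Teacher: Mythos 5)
Your proposal follows essentially the same route as the paper: a first-order radial system whose linear flow is spanned by the four hyperspherical Bessel solutions, a core manifold via variation of constants and contraction on $[0,r_0]$, a far-field normal form reduced through rescaling and transition charts to the nonautonomous Ginzburg--Landau equation, the substitution $A_R(s)\propto s^{\frac{2-n}{2}}\hat u(s)$ converting Hypothesis~\ref{GL:hyp} into the weighted three-dimensional elliptic problem \eqref{GL:real;Laplace}, and asymptotic matching at $r=r_0$ producing the three branches and the fold balance. Two points need correcting, though neither derails the argument. First, the core manifold is two-dimensional for each fixed $\mu$ (parametrised by the two amplitudes $d_1,d_2$ of the bounded modes), not four-dimensional; the $Y^{(n)}_\ell$-modes are slaved to $(d_1,d_2,\mu)$ by the fixed-point equation, which is exactly where the quadratic coefficient $\nu_n\gamma d_1^2$ enters. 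Second, your attribution of the restriction $0<n<4$ to ``integrability of the weight $s^{2-n}$ at infinity for the variational argument'' is not how the paper's logic runs: the variational existence of the ground state of \eqref{GL:real;Laplace} is only established for $0<n<3$ (Proposition~\ref{prop:GL-Laplace}), and for $3\leq n<4$ the paper must invoke the separate Hypothesis~\ref{hyp:GL-Laplace} precisely because $H^1(\mathbb{R}^3)$ solutions fail to exist for $n>3$. The bound $n<4$ itself comes from the amplitude scalings $\mu^{\frac{4-n}{4}}$ and $\mu^{\frac{4-n}{8}}$ needing to vanish as $\mu\to0^+$ so that the matching remains in the small-amplitude regime. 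A minor further remark: the spot B branch in the paper is obtained by running the transition chart in two stages, switching coordinate systems at an intermediate scale $r\sim\mu^{-\frac{4-n}{8}}$, which is the paper's replacement for the McCalla--Sandstede blow-up you cite; your schematic quadratic balance $\gamma d_0^2\sim d_1$ is the right outcome but hides this step.
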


    For planar patterns $\nu_n = \frac{1}{2}\sqrt{\frac{\pi}{6}}$ and for spherical patterns $\nu_n = \frac{\pi}{8}$, which are both in agreement with \cite{lloyd2009localized,mccalla2013spots}. The implicit curve \eqref{Fold:curve} implies that, for a fixed $0<\gamma\ll1$, the fold point $\mu=\mu_*$ with $0<\mu_*\ll1$ must increase as $n$ increases. This is in agreement with Figure~\ref{fig:nSpot}, although we note that the parameter values in Figure~\ref{fig:nSpot} are far from the values where our analysis holds. While we are unable to derive a closed form expression for $q_n$ in the same way as for $\nu_n$, we can numerically compute its value for different choices of $n$ by solving \eqref{GL:real;normal} via finite difference techniques. We present plots of both $\nu_n$ and $q_n$ in Figure~\ref{fig:n_constants}.

\begin{figure}[t!]
    \centering
    \includegraphics[width=0.8\linewidth]{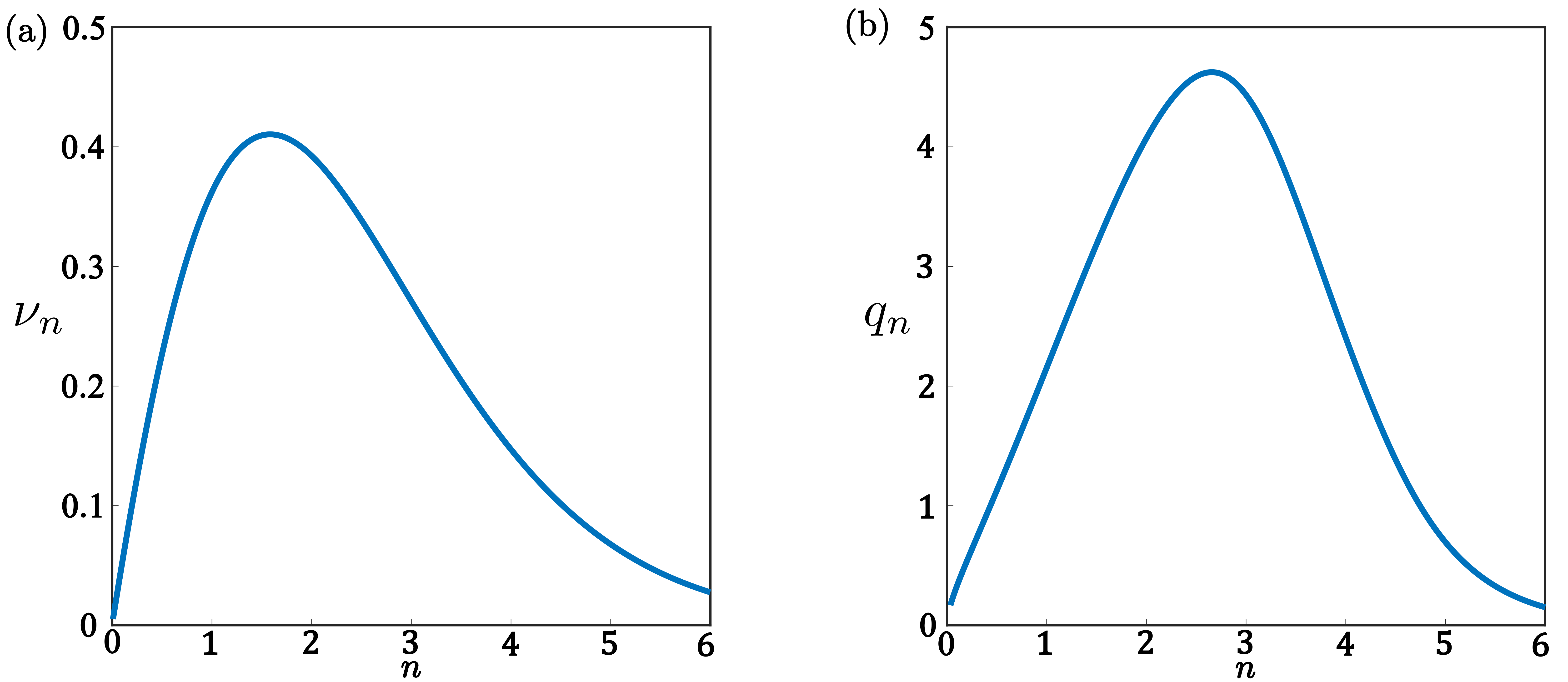}
    \caption{A plot of the values of (a) $\nu_n$ and (b) $q_n$ for $0<n<6$.}
    \label{fig:n_constants}
\end{figure}
%
%
%
%
%
%
%
%
%
%
\section{Proof of Theorem~\ref{thm:SmallPatch}}\label{s:Spatial_Dynamics}
We begin by expressing \eqref{eqn:R-D} as the following first-order system,
\begin{equation}\label{R-D:U;vec}
    \frac{\textnormal{d}}{\textnormal{d} r}\mathbf{U} = \mathcal{A}(r)\mathbf{U} + \mathbf{F}(\mathbf{U},\mu), 
\end{equation}
where $\mathbf{U}:=(\mathbf{u},\mathcal{D}_{0}\mathbf{u})^{T}$, with
\begin{align}
    \mathcal{A}(r) &= \begin{pmatrix}
    \mathbb{O}_{2} & \mathbbm{1}_{2} \\ \mathbf{M}_{1} & -\frac{n}{r}\mathbbm{1}_{2}
    \end{pmatrix},\quad  \mathbf{F}(\mathbf{U};\mu) = \begin{pmatrix}
    \mathbf{0} \\ \displaystyle \mu \mathbf{M}_{2}\mathbf{u} + \mathbf{Q}(\mathbf{u},\mathbf{u}) + \mathbf{C}(\mathbf{u},\mathbf{u},\mathbf{u})
    \end{pmatrix},\nonumber
\end{align}
where we note that $\mathbb{O}_2$ and $\mathbbm{1}_2$ are the zero and identity matrices in $\mathbb{R}^{2\times2}$, respectively.

\subsection{The Core Manifold}\label{subsec:Core}

  We begin by characterising the core manifold $\mathcal{W}_{-}^{cu}(\mu)$, which forms a local invariant manifold containing all small-amplitude bounded solutions to \eqref{eqn:R-D} on the interval $r\in[0,r_{0}]$, for some large $r_{0}>0$. At the bifurcation point $\mu=0$, the linearisation of \eqref{R-D:U;vec}
has solutions of the form
\begin{equation}\label{Ntup:SH;lin,soln}
	\mathbf{V}(r) = \sum_{j=1}^{4}  d_{j} \mathbf{V}_{j}(r), 
\end{equation}
with 
\begin{align}
   \begin{split} \mathbf{V}_{1}(r) &= \frac{\sqrt{\pi}}{2^{\frac{n}{2}}\Gamma(\frac{n+1}{2})}\begin{pmatrix}
    J^{(n)}_{0}(r)\hat{U}_{0} \\ -J^{(n)}_{1}(r)\hat{U}_{0}
    \end{pmatrix}, \\ \mathbf{V}_{2}(r) &= \frac{\sqrt{\pi}}{2^{\frac{n}{2}}\Gamma(\frac{n+1}{2})}\begin{pmatrix}
    r J^{(n)}_{1}(r) \hat{U}_{0} + 2 J^{(n)}_{0}(r)\hat{U}_{1} \\ [r J^{(n)}_{0}(r)-(n-1)J_{1}^{(n)}(r)]\hat{U}_{0} - 2  J^{(n)}_{1}(r)\hat{U}_{1}
    \end{pmatrix}, \\
    \mathbf{V}_{3}(r) &= \frac{\sqrt{\pi}}{2^{\frac{n}{2}}\Gamma(\frac{n+1}{2})}\begin{pmatrix}
    Y^{(n)}_{0}(r)\hat{U}_{0} \\ -Y^{(n)}_{1}(r)\hat{U}_{0}
    \end{pmatrix}, \\ \mathbf{V}_{4}(r) &= \frac{\sqrt{\pi}}{2^{\frac{n}{2}}\Gamma(\frac{n+1}{2})}\begin{pmatrix}
    r Y^{(n)}_{1}(r) \hat{U}_{0} + 2  Y^{(n)}_{0}(r)\hat{U}_{1} \\ [r Y^{(n)}_{0}(r)-(n-1)Y_{1}^{(n)}(r)]\hat{U}_{0} - 2  Y^{(n)}_{1}(r)\hat{U}_{1}
    \end{pmatrix}.\end{split}
\end{align}
   These solutions follow by first noting that the system $\frac{\textnormal{d}}{\textnormal{d} r}\mathbf{V} = \mathcal{A}(r)\mathbf{V}$ is equivalent to 
\begin{equation}
    \left(\mathcal{D}_{n}\mathcal{D}_{0} - \mathbf{M}_{1}\right)\mathbf{u} = \mathbf{0}
\end{equation}
with $\mathbf{V} =(\mathbf{u},\mathcal{D}_{0}\mathbf{u})^{T}$. By taking the eigenbasis decomposition $\mathbf{u} = u_1\hat{U}_0 + u_2\hat{U}_1$, we then obtain
\begin{equation}
    \left(\mathcal{D}_{n}\mathcal{D}_{0} + 1\right)u_1 = u_2,\qquad \qquad \left(\mathcal{D}_{n}\mathcal{D}_{0} + 1\right)u_2 = 0,
\end{equation}
which has solutions $u_2 = 2c_2 J_0^{(n)}(r) + 2c_4 Y_n^{(n)}(r)$ and $u_1 = c_1 J_0^{(n)}(r) + c_2 r J_1^{(n)}(r) + c_3 Y_n^{(n)}(r) + c_4 r Y_1^{(n)}(r)$. 

We now present a standard result regarding the existence and parametrisation of $\mathcal{W}^{cu}_{-}(\mu)$ on the interval $[0,r_0]$, for $0<\mu\ll1$. We use the notation $\mathcal{O}_{r_{0}}(\cdot)$ to denote the standard Landau symbol $\mathcal{O}(\cdot)$ with bounding constants that depend on the value of $r_{0}$. Additionally, we define $\mathcal{P}^{cu}_{-}(r_0)$ to be a projection onto the subspace spanned by bounded (as $r\to0$) solutions $\{\mathbf{V}_{1}(r_0), \mathbf{V}_{2}(r_0)\}$, evaluated at the point $r=r_0$.

\begin{table} 
\centering
\begin{tabular}{|c|c|c|}
\hline
     Function & as $r\to0$ & as $r\to\infty$ \\
     \hline
     & & \\
     $J^{(n)}_\ell(r)$ & $\mathcal{O}\left(r^{\ell}\right)$ & $\frac{2^{\frac{n}{2}}\Gamma\left(\frac{n+1}{2}\right)}{\sqrt{\pi}} r^{-\frac{n}{2}}\cos\left(r - \frac{n\pi}{4} - \frac{\ell\pi}{2}\right) + \mathcal{O}\left(r^{-(\frac{n+2}{2})}\right)$\\
     & & \\
     $Y^{(n)}_{\ell}(r)$ & $\mathcal{O}\left(r^{\ell}\left(\cot(\nu\pi) - \frac{2^{2\nu}\Gamma(\nu)\Gamma(\nu+1)}{r^{2\nu} \pi}\right)_{\nu=\ell + \frac{n-1}{2}}\right)$ & $\frac{2^{\frac{n}{2}}\Gamma\left(\frac{n+1}{2}\right)}{\sqrt{\pi}} r^{-\frac{n}{2}}\sin\left(r - \frac{n\pi}{4} - \frac{\ell\pi}{2}\right) + \mathcal{O}\left(r^{-(\frac{n+2}{2})}\right)$\\
     & & \\
     \hline
\end{tabular}
\caption{Orders and expansions for $(n+1)$-dimensional $\ell$'th-order Bessel functions of the first and second kind as $r\to0$ and $r\to\infty$, respectively, where $n\in\mathbb{R}$, $n\geq0$; see \cite[(9.1.10),(9.1.11), and \S9.2]{abramowitz1972handbook}.}
\label{table:Bessel}
\end{table} 

\begin{lem}\label{Lemma:Core} 
 For each fixed $r_{0}>0$, there are constants $\delta_{1},\delta_{2}>0$ such that the set $\mathcal{W}^{cu}_{-}(\mu)$ of solutions $\mathbf{U}(r)$ of \eqref{R-D:U;vec} for which $\sup_{0\leq r\leq r_{0}}\|\mathbf{U}(r)\|<\delta_{1}$ is, for $|\mu|<\delta_{1}$, a smooth $2$ dimensional manifold. Furthermore, each $\mathbf{U}(r_{0})\in\mathcal{W}^{cu}_{-}(\mu)$ with $|\mathcal{P}^{cu}_{-}(r_{0})\mathbf{U}(r_{0})|<\delta_{2}$ can be written uniquely as 
\begin{equation}\label{U:Core;Ntup}
	\begin{split}
 		\mathbf{U}(r_{0}) &= d_{1}\mathbf{V}_{1}(r_{0}) + d_{2}\mathbf{V}_{2}(r_{0}) + \mathbf{V}_{3}(r_{0})\mathcal{O}_{r_0}(|\mu||\mathbf{d}| + |\mathbf{d}|^{2})\\
 &\qquad \qquad  + \mathbf{V}_{4}(r_{0})\left[\left(\nu_n + \mathcal{O}\left(r_0^{-\frac{n}{2}}\right)\right)\gamma\, d_1^2 + \mathcal{O}_{r_0}(|\mu||\mathbf{d}| + |d_{2}|^{2} + |d_{1}|^{3})\right],
	\end{split}
\end{equation}
where $\gamma$ is defined in \eqref{gamma}, $\nu_n$ is defined in \eqref{nu_n} for all $n>0$, $\mathbf{d}:=\left(d_{1},d_{2}\right)\in\mathbb{R}^{2}$ with $|\mathbf{d}|<\delta_{2}$, and the right-hand side of \eqref{U:Core;Ntup} depends smoothly on $(\mathbf{d},\mu)$.
\end{lem}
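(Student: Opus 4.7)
The plan is to express every small-amplitude solution of \eqref{R-D:U;vec} on $[0,r_0]$ via a variation-of-constants formula built from the fundamental matrix $\Phi(r)=\bigl[\mathbf{V}_1(r)\,|\,\mathbf{V}_2(r)\,|\,\mathbf{V}_3(r)\,|\,\mathbf{V}_4(r)\bigr]$ of the $\mu=0$ linearisation, and then to read the coefficients of $\mathbf{V}_3(r_0),\mathbf{V}_4(r_0)$ off the resulting fixed point. Writing $\mathbf{U}(r)=\Phi(r)\mathbf{a}(r)$ turns \eqref{R-D:U;vec} into $\mathbf{a}'(r)=\Phi(r)^{-1}\mathbf{F}(\mathbf{U}(r),\mu)$. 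Because the matrix $\mathcal{A}(r)$ in \eqref{R-D:U;vec} has trace $-2n/r$, Liouville's formula forces $\det\Phi(r)=Cr^{-2n}$, so the near-origin asymptotics in Table~\ref{table:Bessel} pin down the singularities of $\Phi(r)^{-1}$ and hence of the associated Green's function.

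Next, I would impose boundedness at the origin by integrating the coefficients $a_3,a_4$, attached to the singular solutions $\mathbf{V}_3,\mathbf{V}_4$, starting from $r=0$, and the regular coefficients $a_1,a_2$ from $r=r_0$ with prescribed endpoint values $d_1,d_2$. Although $\Phi(r)^{-1}$ blows up like $r^{-n}$, the source term $\mathbf{F}$ only depends on the $\mathbf{u}$-entries of $\mathbf{U}$, which inherit the regular $J_0^{(n)},rJ_1^{(n)}$-structure of the ansatz, so the integrals remain absolutely convergent at $r=0$. A standard Picard iteration in $C([0,r_0];\mathbb{R}^4)$ then converges for $|\mathbf{d}|+|\mu|$ sufficiently small, producing a unique small solution $\mathbf{U}(r;\mathbf{d},\mu)$ whose smoothness in $(\mathbf{d},\mu)$ is inherited from $\mathbf{F}$ via the implicit function theorem. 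The map $(d_1,d_2)\mapsto\mathbf{U}(\cdot;\mathbf{d},\mu)$ then parametrises $\mathcal{W}^{cu}_{-}(\mu)$ as a smooth $2$-dimensional manifold, and by construction $\mathcal{P}^{cu}_{-}(r_0)\mathbf{U}(r_0)=d_1\mathbf{V}_1(r_0)+d_2\mathbf{V}_2(r_0)$.

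What remains, and is the heart of the lemma, is to isolate the $\gamma d_1^2$ coefficient of $\mathbf{V}_4(r_0)$. Substituting the leading ansatz $\mathbf{u}(r)\approx d_1\tfrac{\sqrt{\pi}}{2^{n/2}\Gamma(\frac{n+1}{2})}J_0^{(n)}(r)\hat U_0$ into $\mathbf{Q}(\mathbf{u},\mathbf{u})$ and projecting on $\hat U_1^*$ produces the source $\gamma d_1^2\tfrac{\pi}{2^{n}\Gamma(\frac{n+1}{2})^{2}}[J_0^{(n)}(r)]^{2}$; by the structure of $\Phi(r)^{-1}$, only the $a_4$-row contributes at this order, with the other cross terms (linear-in-$\mu$ and of type $d_1d_2,d_2^2,d_1^3$ or higher) absorbed into the stated remainder. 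Evaluating the resulting integral
\[
  \int_0^{r_0}[J_0^{(n)}(r)]^{2}\,k_n(r)\,dr,
\]
where $k_n$ is the appropriate Wronskian weight extracted from $\Phi(r)^{-1}$, extending the interval to $[0,\infty)$, and using the algebraic decay $J_0^{(n)}(r)=\mathcal{O}(r^{-n/2})$ from Table~\ref{table:Bessel} to control the tail, should yield the explicit value $\nu_n=(3/8)^{n/2}\pi/[3\Gamma(n/2)]$ together with the claimed $\mathcal{O}(r_0^{-n/2})$ error. I expect the principal obstacle to be precisely this closed-form Bessel evaluation for the continuous parameter $n$, since standard tables cover only integer and half-integer orders; reducing the integrand using the Bessel-operator identities \eqref{Dk:ident} and then invoking a Mellin-transform or Weber--Schafheitlin type calculation should be the correct route, and it is what forces the specific $n$-dependence recorded in \eqref{nu_n}.
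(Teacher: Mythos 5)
Your proposal follows essentially the same route as the paper: a variation-of-constants fixed-point formulation built on the fundamental system $\{\mathbf{V}_j\}$ (the rows of your $\Phi(r)^{-1}$ are exactly the paper's adjoint solutions $\mathbf{W}_j$, carrying the $r^n$ weight your Liouville computation predicts), with the singular components integrated from $r=0$ and the regular ones from $r=r_0$, followed by a Taylor expansion whose quadratic coefficient reduces to the Weber--Schafheitlin-type integral $\int_0^\infty s^n (J_0^{(n)}(s))^3\,\mathrm{d}s$ --- which is precisely what your $\int [J_0^{(n)}]^2 k_n$ becomes once $k_n(r)\propto r^n J_0^{(n)}(r)$ is read off the $a_4$-row. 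The one point to tighten is the tail estimate $\int_{r_0}^\infty s^n (J_0^{(n)}(s))^3\,\mathrm{d}s = \mathcal{O}(r_0^{-n/2})$: since the integrand decays only like $s^{-n/2}$ in absolute value, for $0<n\leq 2$ algebraic decay alone is not enough and you must exploit the zero-mean oscillation of $\cos^3\left(s-\tfrac{n\pi}{4}\right)$.
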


\begin{proof}
This statement is proven in a similar way to several previously proven lemmata; see, for example, \cite[Lemma~1]{lloyd2009localized} or \cite[Lemma~4.1]{hill2023approximate}.   We first note that the linear adjoint problem $\frac{\textnormal{d}}{\textnormal{d}r}\mathbf{W} = -\mathcal{A}^{T}(r)\mathbf{W}$ is equivalent to $(\mathcal{D}_{n}\mathcal{D}_{0} - \mathbf{M}_{1}^{T})\mathbf{w}=\mathbf{0}$, where $\mathbf{W} = (-r^n \mathcal{D}_{0}\mathbf{w},r^n \mathbf{w})^{T}$, and thus has independent solutions of the form
\begin{align}
\begin{split}
    \mathbf{W}_{1} &= \frac{\sqrt{\frac{\pi}{2}}}{2^{(\frac{n+1}{2})}\Gamma(\frac{n+1}{2})}\begin{pmatrix}
    r^n\left[ r Y^{(n)}_{0}(r)\hat{U}_{1}^{*} - 2 Y^{(n)}_{1}(r)\hat{U}_{0}^{*} \right] \\ -r^n\left[[r Y^{(n)}_{1 }(r) - (n-1) Y_{n}^{(n)}(r)]\hat{U}_{1}^{*} + 2 Y^{(n)}_{0}(r)\hat{U}_{0}^{*}\right] 
    \end{pmatrix}, \\ \mathbf{W}_{2} &= \frac{\sqrt{\frac{\pi}{2}}}{2^{(\frac{n+1}{2})}\Gamma(\frac{n+1}{2})}\begin{pmatrix}
    - r^n Y^{(n)}_{1}(r)\hat{U}_{1}^{*} \\ -r^n Y^{(n)}_{0}(r)\hat{U}_{1}^{*}
    \end{pmatrix}, \\
    \mathbf{W}_{3} &= \frac{\sqrt{\frac{\pi}{2}}}{2^{(\frac{n+1}{2})}\Gamma(\frac{n+1}{2})}\begin{pmatrix}
     -r^n\left[ r J^{(n)}_{0}(r) \hat{U}_{1}^{*} - 2 J^{(n)}_{1}(r)\hat{U}_{0}^{*}\right] \\ r^n\left[[r J^{(n)}_{1}(r) - (n-1)J_{0}^{(n)}(r)] \hat{U}_{1}^{*} + 2  J^{(n)}_{0}(r)\hat{U}_{0}^{*}\right]
    \end{pmatrix}, \\ \mathbf{W}_{4} &=  \frac{\sqrt{\frac{\pi}{2}}}{2^{(\frac{n+1}{2})}\Gamma(\frac{n+1}{2})}\begin{pmatrix}
    r^n J^{(n)}_{1}(r) \hat{U}_{1}^{*}\\ r^n J^{(n)}_{0}(r)\hat{U}_{1}^{*}
    \end{pmatrix}.
    \end{split}\label{Ntup:R-D;adj,soln}
\end{align}
such that the relation
\begin{equation}
    \langle \mathbf{W}_{i}(r), \mathbf{V}_{j}(r)\rangle_{4}  = \delta_{i,j}
\end{equation}
holds for all $r>0$, $i,j \in\{1,2,3,4\}$. The orthonormal property follows in part from the identity
\begin{equation*}
    r^n[J_{1}^{(n)}(r)Y_{0}^{(n)}(r) - J_{0}^{(n)}(r)Y_{1}^{(n)}(r)] 
    = \frac{2^n \Gamma(\tfrac{n+1}{2})^2}{\pi}.
\end{equation*}
For a given $\mathbf{d}=(d_{1},d_{2})\in\mathbb{R}^{2(N+1)}$, we consider the fixed-point equation
\begin{equation}\label{var:U}
	\begin{split}
    	\mathbf{U}(r) &= \sum_{j=1}^{2}  d_{j} \mathbf{V}_{j}(r) + \sum_{j=1}^{2} \mathbf{V}_{j}(r)\int_{r_{0}}^{r} \langle \mathbf{W}_{j}(s), \mathbf{F}(\mathbf{U}[s];\mu) \rangle_{4} \,\textnormal{d} s + \sum_{j=3}^{4} \mathbf{V}_{j}(r)\int_{0}^{r}  \langle \mathbf{W}_{j}(s), \mathbf{F}(\mathbf{U}[s];\mu) \rangle_{4} \,\textnormal{d} s, 
    	\end{split}
\end{equation}
  on $C([0,r_{0}],\mathbb{R}^{4})$, whose solutions define all solutions to \eqref{R-D:U;vec} that remain bounded on $[0,r_0]$. After proving that \eqref{var:U} defines a contraction for sufficiently small $\mathbf{d}$ and $\mu$ and evaluating at $r=r_{0}$, we arrive at
\begin{equation}\label{var:U;r0}
    \mathbf{U}(r_{0}) = \sum_{j=1}^{2}  d_{j} \mathbf{V}_{j}(r_{0}) + \sum_{j=3}^{4} \mathbf{V}_{j}(r_{0})\int_{0}^{r_{0}}  \langle \mathbf{W}_{j}(s), \mathbf{F}(\mathbf{U}[s];\mu) \rangle_{4} \,\textnormal{d} s . 
\end{equation}
Then, we introduce
\begin{equation}
    c_{j}\left(d_1, d_2;\mu\right) := \int_{0}^{r_{0}} \langle \mathbf{W}_{j}(s), \mathbf{F}(\mathbf{U}[s];\mu) \rangle_{4} \,\textnormal{d} s,\label{cn:defn}
\end{equation}
for $j=3,4$ so that we can write our small-amplitude core solution as
\begin{equation}
    	\mathbf{U}(r_{0}) = d_{1} \mathbf{V}_{1}(r_{0}) +  d_{2} \mathbf{V}_{2}(r_{0}) +  c_{3}\left(d_{1}, d_{2}; \mu\right) \mathbf{V}_{3}(r_{0}) + c_{4}\left(d_{1}, d_{2}; \mu\right) \mathbf{V}_{4}(r_{0}). 
\end{equation}
In order to arrive at \eqref{U:Core;Ntup}, we apply a Taylor expansion to \eqref{cn:defn} about $|d_{1}| = |d_{2}|=\mu=0$ and find 
\begin{equation}
	\begin{split}
    		c_{3}\left(d_{1},d_{2};\mu\right) &= \mathcal{O}_{r_{0}}\left(|\mu||\mathbf{d}| + |\mathbf{d}|^{2}\right), \\
    		c_{4}\left(d_{1},d_{2};\mu\right) &= \left[\nu_n + \mathcal{O}\left(r_0^{-\frac{n}{2}}\right)\right]\gamma\, d_1^2 + \mathcal{O}_{r_{0}}\left(|\mu||\mathbf{d}| + |d_{2}|^{2} + |d_{1}|^{3}\right), \\
    	\end{split}
\end{equation}
where $\gamma=\langle \hat{U}_1^*, \mathbf{Q}(\hat{U}_0,\hat{U}_0) \rangle_{2}$ and
\begin{equation}
    \nu_{n} := \frac{1}{2}\left(\frac{\sqrt{\frac{\pi}{2}}}{2^{(\frac{n-1}{2})}\Gamma(\frac{n+1}{2})}\right)^3 \int_{0}^{\infty} s^n (J_0^{(n)}(s))^3 \,\textnormal{d} s.
\end{equation}
Here, we have used the fact that
\begin{equation*}
    \int_{r_0}^{\infty} s^n (J_0^{(n)}(s))^3 \,\textnormal{d} s = \mathcal{O}\left(r_0^{-\frac{n}{2}}\right)
\end{equation*}
for $n>0$ and $r_0\gg1$ in order to determine the order of the remainder terms. We compute the explicit value of $\nu_n$, obtaining
 \begin{equation}
\begin{split}
    \nu_{n}
    ={}& \frac{\pi\sqrt{\pi}}{4\sqrt{2}}\int_{0}^{\infty} s^{1-(\frac{n-1}{2})}(J_{\frac{n-1}{2}}(s))^3 \,\textnormal{d} s = \left(\frac{3}{8}\right)^{\frac{n}{2}}\frac{\pi}{3\Gamma(\frac{n}{2})}.\\
\end{split}
\end{equation}
This completes the proof.
\end{proof} 

  For sufficiently large values of $r_{0}$, Table~\ref{table:Bessel} allows us to write down the following leading-order expansion for \eqref{U:Core;Ntup}
\begin{equation}\label{Core:un}
	\begin{split}
    \mathbf{u}(r_{0}) &= r_0^{-\frac{n}{2}}\left[d_2 r_0\left(1 + \mathcal{O}(r_0^{-1})\right)\sin\left(y_{n}\right) + d_1 \left(1 + \mathcal{O}(r_0^{-1})\right)\cos\left(y_{n}\right) + \mathcal{O}_{r_0}(|\mu||\mathbf{d}| + |\mathbf{d}|^{2})\right]\hat{U}_0\\
   &\qquad + 2r_0^{-\frac{n}{2}}\Big[\gamma\, d_1^2\left(\nu_n + \mathcal{O}(r_0^{-\frac{n}{2}}+r_0^{-1})\right) \sin\left(y_{n}\right) + d_2 \left(1 + \mathcal{O}(r_0^{-1})\right)\cos\left(y_{n}\right)\\
   &\hspace{8.2cm}  + \mathcal{O}_{r_0}(|\mu||\mathbf{d}| + |d_2|^{2} + |d_1|^3)\Big]\hat{U}_1, \\
 \mathbf{v}(r_{0}) &= r_0^{-\frac{n}{2}}\left[ d_2 r_0 \left(1 + \mathcal{O}(r_0^{-1})\right)\cos\left(y_{n}\right) -d_1\left(1 + \mathcal{O}(r_0^{-1})\right)\sin\left(y_{n}\right) + \mathcal{O}_{r_0}(|\mu||\mathbf{d}| + |\mathbf{d}|^{2})\right]\hat{U}_0\\
   &\qquad + 2r_0^{-\frac{n}{2}}\Big[\gamma\, d_1^2\left(\nu_n + \mathcal{O}(r_0^{-\frac{n}{2}}+r_0^{-1})\right) \cos\left(y_{n}\right) - d_2 \left(1 + \mathcal{O}(r_0^{-1})\right)\sin\left(y_{n}\right) \\
   & \hspace{8.2cm} + \mathcal{O}_{r_0}(|\mu||\mathbf{d}| + |d_2|^{2} + |d_1|^3)\Big]\hat{U}_1. \\
	\end{split}
\end{equation}
  Here we have defined $y_{n}:=r_{0} - \frac{n\pi}{4}$, and the remainder terms $\mathcal{O}_{r_{0}}(\cdot)$ capture the higher order terms for small values of $|\mathbf{d}|$ and $|\mu|$.

\subsection{The Far-Field Manifold}\label{subsec:Far}

  We seek to find all small-amplitude exponentially-decaying solutions to \eqref{eqn:R-D}, thereby constructing the far-field manifold $\mathcal{W}_{+}^{s}(\mu)$. We introduce the variable $\sigma(r)\geq0$ to replace any $\frac{1}{r}$ terms in \eqref{R-D:U;vec}, so that we obtain the autonomous extended system
\begin{equation}\label{R-D:Farf}
	\begin{split}
    		\frac{\textnormal{d}}{\textnormal{d} r}\mathbf{U} &= \mathcal{A}(\infty)\mathbf{U} + \widetilde{\mathbf{F}}(\mathbf{U}; \mu, \sigma),\\
    		\frac{\textnormal{d}}{\textnormal{d} r}\sigma &= -\sigma^{2},
	\end{split}
\end{equation}
  with the property that $\sigma(r)= r^{-1}$ is an invariant manifold of \eqref{R-D:Farf}. This invariant manifold clearly recovers the non-autonomous system \eqref{R-D:U;vec}, and so any solutions to \eqref{R-D:Farf} where $|\mathbf{U}|\to0$ exponentially fast as $r\to\infty$ and $\sigma(r_0) = r_0^{-1}$ is an exponentially decaying solution of \eqref{R-D:U;vec}.

  We first transform \eqref{R-D:Farf} into a radial normal form so that we can identify exponentially decaying solutions. We introduce complex amplitudes $\widetilde{A}, \widetilde{B}$, where
\begin{equation}
    \begin{aligned}
    \mathbf{u} &= (\widetilde{A} + \overline{\widetilde{A}})\hat{U}_{0} + 2\textnormal{i}(\widetilde{B} - \overline{\widetilde{B}})\hat{U}_{1} , &\qquad \mathbf{v} &= \big[\textnormal{i}(\widetilde{A} - \overline{\widetilde{A}}) + (\widetilde{B} + \overline{\widetilde{B}})\big]\hat{U}_{0} - 2(\widetilde{B} + \overline{\widetilde{B}})\hat{U}_{1},\\
    \widetilde{A} &= \frac{1}{2}\big\langle \hat{U}_{0}^{*}, \mathbf{u} - \textnormal{i}\mathbf{v}\big\rangle_{2} - \frac{\textnormal{i}}{4}\big\langle \hat{U}_{1}^{*}, \mathbf{v}\big\rangle_{2}, &\qquad \widetilde{B} &= -\frac{\textnormal{i}}{4}\big\langle \hat{U}_{1}^{*}, \mathbf{u} - \textnormal{i}\mathbf{v}\big\rangle_{2},
    \end{aligned}
    \label{R-D:Transformation;Farfield}
\end{equation}
so that we can rewrite \eqref{R-D:Farf} as
\begin{equation}
	\begin{split}
     \frac{\textnormal{d}}{\textnormal{d} r} \widetilde{A} &= \textnormal{i}\widetilde{A} + \widetilde{B} - \frac{n}{2}\sigma\left(\widetilde{A} - \overline{\widetilde{A}}\right) - \frac{\mathrm{i}}{2}\mathcal{F}_{A}(\widetilde{A},\widetilde{B};\mu),\\
 \frac{\textnormal{d}}{\textnormal{d} r} 
 \widetilde{B} &= \mathrm{i}\widetilde{B} - \frac{n}{2}\sigma\left(\widetilde{B} + \overline{\widetilde{B}}\right) - \frac{1}{2}\mathcal{F}_{B}(\widetilde{A},\widetilde{B};\mu),\label{amp:AB;tilde} \\
    \frac{\textnormal{d}}{\textnormal{d} r} \sigma &= -\sigma^{2}.
	\end{split}
\end{equation}
Here we have defined $\mathcal{F}_{A} := \big\langle \hat{U}_{0}^{*} + \frac{1}{2}\hat{U}_{1}^{*}, \mathcal{F}\big\rangle$,  $\mathcal{F}_{B} := \big\langle \frac{1}{2}\hat{U}_{1}^{*}, \mathcal{F}\big\rangle$, and 
\begin{align}
    \mathcal{F} &= \mu \mathbf{M}_{2}\bigg[\big(\widetilde{A} + \overline{\widetilde{A}}\big) \hat{U}_{0} + 2 \textnormal{i} \big(\widetilde{B} - \overline{\widetilde{B}}\big)\hat{U}_{1}\bigg] + \mathbf{Q}_{0,0}\left( \widetilde{A}^2 + 2 |\widetilde{A}|^2 + \overline{\widetilde{A}}\,^2\right)  \nonumber\\
    &\quad +  4\textnormal{i} \mathbf{Q}_{0,1} \left( \widetilde{A}\widetilde{B} + \overline{\widetilde{A}} \widetilde{B} - \widetilde{A}\overline{\widetilde{B}} - \overline{\widetilde{A}}\overline{\widetilde{B}}\right) - 4 \mathbf{Q}_{1,1}\left(\widetilde{B}^2 - 2|\widetilde{B}|^2 +\overline{\widetilde{B}}\,^2 \right)\nonumber\\
      &\quad  + \mathbf{C}_{0,0,0} \left(\widetilde{A}^3 + 3  |\widetilde{A}|^2\widetilde{A} + 3  |\widetilde{A}|^2 \overline{\widetilde{A}}  + \overline{\widetilde{A}}\,^3\right) - 8\textnormal{i} \mathbf{C}_{1,1,1} \left(\widetilde{B}^3 - 3 |\widetilde{B}|^2\widetilde{B} +3 |\widetilde{B}|^2\overline{\widetilde{B}} -\overline{\widetilde{B}}\,^3\right)\nonumber\\
    &\quad  + 6\textnormal{i} \mathbf{C}_{0,0,1} \left(\widetilde{A}^2 \widetilde{B} + 2 |\widetilde{A}|^2 \widetilde{B} + \overline{\widetilde{A}}\,^2\widetilde{B} - \widetilde{A}^2 \overline{\widetilde{B}}- 2|\widetilde{A}|^2 \overline{\widetilde{B}} - \overline{\widetilde{A}}\,^2 \overline{\widetilde{B}}\right) \nonumber\\
    &\quad  - 12 \mathbf{C}_{0,1,1}\left(\widetilde{A}\widetilde{B}^2 -2\widetilde{A} |\widetilde{B}|^2 + \widetilde{A}\overline{\widetilde{B}}\,^2 + \overline{\widetilde{A}} \widetilde{B}^2 - 2 \overline{\widetilde{A}} |\widetilde{B}|^2 + \overline{\widetilde{A}} \overline{\widetilde{B}}\,^2\right). \nonumber
\end{align}
 where we recall from Hypothesis~\ref{R-D:hyp} that $\mathbf{Q}_{i,j} := \mathbf{Q}(\hat{U}_{i},\hat{U}_{j})$, $\mathbf{C}_{i,j,k} := \mathbf{C}(\hat{U}_{i},\hat{U}_{j},\hat{U}_{k})$ for $i,j,k\in\{0,1\}$.
\begin{rmk}\label{rmk:Core}
  The core manifold \eqref{Core:un} can be expressed as 
\begin{equation}\label{Core:An}
	\begin{split}
   \widetilde{A}(r_0) &= \frac{1}{2}r_0^{-\frac{n}{2}}\mathrm{e}^{\mathrm{i}y_n}\left[d_1 \left(1 + \mathcal{O}(r_0^{-1})\right) -\mathrm{i}d_2  r_0\left(1 + \mathcal{O}(r_0^{-1})\right) + \mathcal{O}_{r_0}(|\mu||\mathbf{d}| + |\mathbf{d}|^{2})\right]\\
   \widetilde{B}(r_0) &= \frac{1}{2}r_0^{-\frac{n}{2}}\mathrm{e}^{\mathrm{i}y_n}\left[ -\gamma\, d_1^2\left(\nu_n  + \mathcal{O}\left(r_0^{-\frac{n}{2}}+r_0^{-1}\right)\right) - \mathrm{i} d_2 \left(1 + \mathcal{O}(r_0^{-1})\right) + \mathcal{O}_{r_0}(|\mu||\mathbf{d}| + |d_2|^{2} + |d_1|^3)\right]\\
	\end{split}
\end{equation}
where $\nu_n = \left(\frac{3}{8}\right)^{\frac{n}{2}}\frac{\pi}{3\Gamma(\frac{n}{2})}$, $\gamma=\langle \hat{U}_1^*, \mathbf{Q}_{0,0} \rangle_{2}$ and $y_n = r_0 - \frac{n\pi}{4}$.
\end{rmk}

\begin{figure}
\,\hspace{0.5cm}\includegraphics[width=0.75\linewidth]{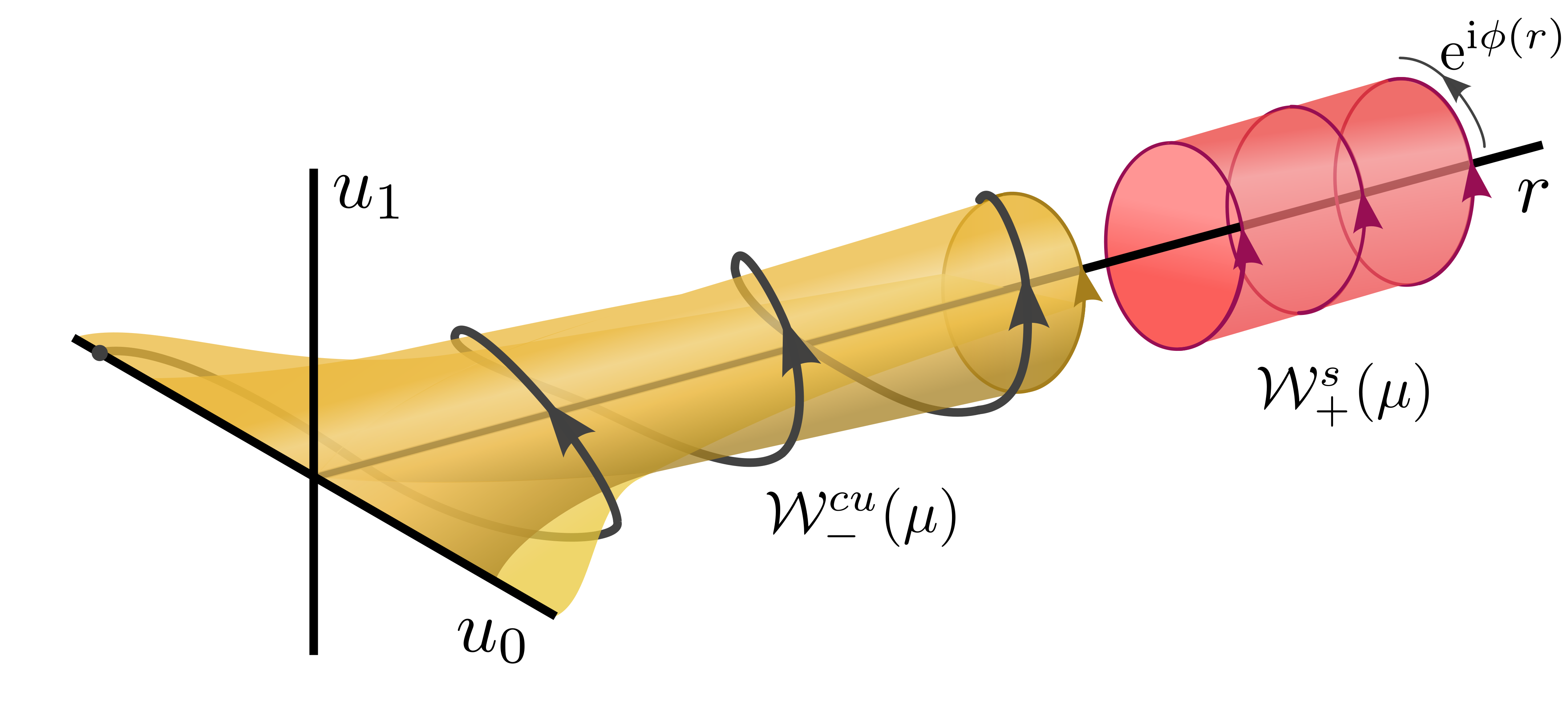}
    \caption{A diagram of the evolving shape of the core manifold $\mathcal{W}^{cu}_{-}(\mu)$ as $r$ increases, where $u_i:=\langle \hat{U}_{i}^*,\mathbf{u}\rangle$ for $i=0,1$. The far-field manifold $\mathcal{W}^{s}_{+}(\mu)$ is constructed in a co-rotating frame with phase $\mathrm{e}^{\mathrm{i}\phi(r)}$.}
    \label{fig:manifold}
\end{figure}

We note that the core manifold $\mathcal{W}^{cu}_{-}(\mu)$ converges to a co-rotating two-dimensional manifold \eqref{Core:An}, with a co-rotating phase $\mathrm{e}^{\mathrm{i}(r_0-\frac{n\pi}{4})}$. We then expect the far-field manifold to possess a co-rotating phase $\mathrm{e}^{\mathrm{i}\phi(r)}$, where $\phi(r)$ is some perturbation from $(r-\frac{n\pi}{4})$; see Figure~\ref{fig:manifold}. We apply standard normal form transformations, as seen in \cite{scheel2003radially,lloyd2009localized,Scheel2014Grain}, to \eqref{amp:AB;tilde} so that all low-order non-resonant terms and the co-rotating phase $\mathrm{e}^{\mathrm{i}\phi(r)}$ are removed from the right-hand side. For a more detailed proof of this result, see the analogous proof of Lemma 3.3 in Appendix A of \cite{hill2024dihedral}.

\begin{lem}\label{Lemma:normal} 
There exists a change of coordinates
\begin{equation}\label{Ntup:Normal;transf}
    \begin{pmatrix}
    A \\ B
    \end{pmatrix} := \textnormal{e}^{-\textnormal{i}\phi(r)}\left[\mathbbm{1} + \mathcal{T}(\sigma)\right]\begin{pmatrix}
    \widetilde{A} \\ \widetilde{B}
    \end{pmatrix} + \mathcal{O}((|\mu| + |\widetilde{A}| + |\widetilde{B}|)(|\widetilde{A}| + |\widetilde{B}|)),
\end{equation}
such that \eqref{amp:AB;tilde} becomes
\begin{equation}\label{Ntup:NormalForm}
	\begin{split}
    \frac{\textnormal{d}}{\textnormal{d}r} A &= - \frac{n}{2} \sigma A + B + R_{A}(A, B,\sigma,\mu),\\
    \frac{\textnormal{d}}{\textnormal{d}r} B &= -\frac{n}{2} \sigma B + c_{0}\,\mu A + c_3 |A|^2 A + R_{B}(A, B,\sigma,\mu),\\
    \frac{\textnormal{d}}{\textnormal{d} r} \sigma &= -\sigma^{2},
	\end{split}
\end{equation}
where we recall from Hypothesis~\ref{R-D:hyp} that $c_{0}= \frac{1}{4}\big\langle \hat{U}_{1}^{*}, -\mathbf{M}_{2}\hat{U}_{0}\big\rangle_{2}$ and
\begin{equation*}
    \begin{split}
    c_{3} ={}& - \bigg[\bigg(\frac{5}{6}\big[\big\langle \hat{U}_{0}^{*},\mathbf{Q}_{0,0}\big\rangle_{2} + \big\langle \hat{U}_{1}^{*},\mathbf{Q}_{0,1}\big\rangle_{2}\big] + \frac{19}{18}\big\langle \hat{U}_{1}^{*},\mathbf{Q}_{0,0}\big\rangle_{2} \bigg)\big\langle \hat{U}_{1}^{*},\mathbf{Q}_{0,0}\big\rangle_{2} +\frac{3}{4}\big\langle \hat{U}_{1}^{*}, \mathbf{C}_{0,0,0}\big\rangle_{2}\bigg]. \end{split}
\end{equation*}
The coordinate change \eqref{Ntup:Normal;transf} is polynomial in $(A,B,\sigma)$ and smooth in $\mu$, and $\mathcal{T}(\sigma) = \mathcal{O}(\sigma)$ is linear and upper triangular for each $\sigma$. The remainder terms satisfy
\begin{equation}
R_{A/B} = \mathcal{O}([|\mu|^{2} + |\sigma|^{3} + (|A| + |B|)^{2}]|B| + [|\mu|^{2} + |\sigma|^{3} + |A|^{3}]|A|),
\end{equation}
while $\phi(r)$ satisfies
\begin{equation}
    \frac{\textnormal{d}}{\textnormal{d} r}\phi = 1 + \mathcal{O}\left(|\mu| + |\sigma|^{2}\right), \qquad \phi(0) = 0.\nonumber
\end{equation}
\end{lem}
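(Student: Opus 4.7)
My plan is to follow the standard near-identity normal form procedure for the reversible $1{:}1$ resonant (Hamiltonian--Hopf) bifurcation, adapted to the non-autonomous radial setting where $\sigma = 1/r$ is treated as an additional dynamical variable that acts as a small, slowly-varying parameter. First I would remove the fast rotation by writing $(A, B) = \mathrm{e}^{-\mathrm{i}\phi(r)}(\widetilde A, \widetilde B)$ with $\phi'(r) = 1 + \mathcal{O}(|\mu| + \sigma^2)$, where the $\mathcal{O}(|\mu| + \sigma^2)$ correction will be fixed self-consistently to absorb the contributions that would otherwise shift the linear spectrum. After this step the linearisation at $\mu = \sigma = 0$ is the nilpotent Jordan block $\bigl(\begin{smallmatrix}0 & 1 \\ 0 & 0\end{smallmatrix}\bigr)$, so we are in the standard reversible Takens--Bogdanov setting for which the resonant monomials under the $U(1)$ phase action $(A, B) \mapsto (\mathrm{e}^{\mathrm{i}\theta} A, \mathrm{e}^{\mathrm{i}\theta} B)$ are exactly those of net phase $+1$.

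Next I would apply a polynomial near-identity change of coordinates $(\widetilde A, \widetilde B) \mapsto (\mathbbm{1} + \mathcal{T}(\sigma))(\widetilde A, \widetilde B) + P(\widetilde A, \widetilde B, \sigma, \mu)$, built up degree by degree. A linear upper-triangular $\sigma$-dependent piece $\mathcal{T}(\sigma) = \mathcal{O}(\sigma)$ eliminates the non-resonant conjugate contributions $\overline{\widetilde A}, \overline{\widetilde B}$ arising from the $\sigma(\widetilde A - \overline{\widetilde A})$ and $\sigma(\widetilde B + \overline{\widetilde B})$ terms, while preserving the diagonal $-\tfrac{n}{2}\sigma$ coefficients that encode the radial geometry and cannot be removed. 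Every quadratic monomial in $\mathcal{F}_A, \mathcal{F}_B$ is non-resonant under the phase action, so a quadratic part of $P$ kills them outright; the linear $\mu$-contribution persists and, after projecting $\mu\mathbf{M}_2\mathbf{u}$ onto the $B$-component, produces $c_0\,\mu A$ with $c_0$ given by \eqref{c0}. At cubic order the only resonant monomials are multiples of $|A|^2 A$, and the resulting coefficient is obtained from two sources: the direct $\mathbf{C}_{0,0,0}$ contribution and the cascade generated when the quadratic part of $P$ feeds back through the $\mathbf{Q}_{i,j}$ nonlinearity. A direct computation shows the resonant cubic coefficient is precisely $c_3$ as in \eqref{c3}; this tedious but mechanical calculation is identical to the $\ell = 0$ case carried out in Appendix A of \cite{hill2024dihedral}, to which I would defer.

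The principal obstacle is establishing the precise remainder estimate $R_{A/B} = \mathcal{O}([|\mu|^2 + |\sigma|^3 + (|A|+|B|)^2]|B| + [|\mu|^2 + |\sigma|^3 + |A|^3]|A|)$. This requires verifying that the $\sigma^2 A, \sigma^2 B$ contributions generated at intermediate stages are themselves removed by a further polynomial transformation (so that only $\mathcal{O}(\sigma^3)$ non-autonomous corrections survive), that all non-resonant cubic conjugate monomials such as $A^3, \overline A^3, A^2\overline A$--and the analogous mixed $A$-$B$ terms--are genuinely eliminated rather than merely reshuffled, and that none of the quadratic substitutions inadvertently alter the resonant coefficients $c_0$ and $c_3$ beyond the terms already accounted for. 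Ensuring simultaneously that every transformation is polynomial in $(A, B, \sigma)$, smooth in $\mu$, and invertible on a neighbourhood of the origin is where the argument is most error-prone, but the bookkeeping is entirely parallel to the planar and three-dimensional cases and carries through unchanged for general $n > 0$.
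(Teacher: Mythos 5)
Your proposal follows essentially the same route as the paper, which itself gives no self-contained proof but simply invokes the standard radial normal-form machinery of \cite{scheel2003radially,lloyd2009localized,Scheel2014Grain} and defers the detailed computation of $c_3$ to the analogous Lemma 3.3 in Appendix A of \cite{hill2024dihedral}, exactly as you do. The only slight imprecision is your claim that $|A|^2A$ is the sole resonant cubic monomial: terms such as $|A|^2B$, $A^2\overline{B}$ and $|B|^2A$ are also resonant under the phase action, but since each carries a factor of $B$ or $\overline{B}$ they are absorbed into the stated remainder $R_{A/B}$, so your argument is unaffected.
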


  The resultant system \eqref{Ntup:NormalForm} is the radial normal form for an $n$-dimensional Hamiltonian--Hopf bifurcation, which we extend further by introducing the unconstrained variable $\kappa(r)$ to take the place of $\sqrt{\mu}$. Then, \eqref{Ntup:NormalForm} becomes
\begin{equation}\label{Normal:Ext}
	\begin{split}
    	\frac{\textnormal{d}}{\textnormal{d}r} A &= - \frac{n}{2} \sigma A + B + R_{A}(A, B,\sigma,\kappa),\\
    \frac{\textnormal{d}}{\textnormal{d}r} B &= -\frac{n}{2} \sigma B + c_{0}\,\kappa^2 A + c_3 |A|^2 A + R_{B}(A, B,\sigma,\kappa),\\
    \frac{\textnormal{d}}{\textnormal{d} r} \sigma &= -\sigma^{2},\\
    		\frac{\textnormal{d}}{\textnormal{d}r}\kappa &= 0
	\end{split}
\end{equation}  
where we have
\begin{equation}
     	R_{A/B} = \mathcal{O}([\kappa^{4} + |\sigma|^{3} + (|A| + |B|)^{2}]|B| + [\kappa^{4} + |\sigma|^{3} + |A|^{3}]|A|). 
\end{equation}

The remainder of this section is focused on characterising exponentially decaying trajectories for the dynamical system \eqref{Normal:Ext}, restricted to the invariant subspace $\{(\sigma,\kappa) = (r^{-1}, \sqrt{\mu})\}$. We characterise such trajectories for $r \geq r_1 \mu^{-\frac{1}{2}}$ with $r_1\ll1$ fixed (the rescaling chart, see \S\,\ref{subsec:Rescaling}), and then for $r_0 \leq r \leq r_1 \mu^{-\frac{1}{2}}$ (the transition chart, see \S\,\ref{subsec:Transition}). Following this, we seek intersections between the far-field trajectories and the core manifold $\mathcal{W}^{cu}_{-}(\mu)$ via asymptotic matching at the point $r=r_0$ (see \S\,\ref{subsec:Matching}). Any trajectories that lie along these intersections will remain bounded as $r\to0^+$ while decaying exponentially fast as $r\to\infty$, thus characterising localised radial solutions to \eqref{eqn:R-D}.

The methods presented in the rest of this section are analogous to several previous studies of localised radial patterns, in particular the recent papers \cite{hill2023approximate,hill2024dihedral}, and so we present only condensed proofs of each result. For more detailed exposition on this approach, see \cite{Hill2021Localised,mccalla2013spots,lloyd2009localized,hill2023approximate,hill2024dihedral}.

\subsection{The Rescaling Chart}\label{subsec:Rescaling}
  To capture the `slow' behaviour of \eqref{Normal:Ext} for large values of $r$, we introduce the standard rescaling coordinates of \cite{mccalla2013spots}
\begin{equation}\label{resc}
    A_{R} :=  \kappa^{-1}A, \quad B_{R} := \kappa^{-2}B,\quad \sigma_{R} :=  \kappa^{-1}\sigma, \quad \kappa_{R} := \kappa, \quad s := \kappa r.
\end{equation}
Then, we can write \eqref{Normal:Ext} in the rescaling chart as,
\begin{equation}\label{Normal:Resc}
	\begin{split}
    		 \frac{\textnormal{d}}{\textnormal{d}s} A_{R} &= - \frac{n}{2} \sigma_{R} A_{R} + B_{R} + |\kappa_{R}|^{-2}R_{A}(\kappa_{R}A_{R}, \kappa^2_{R} B_{R},\kappa_{R}\sigma_{R},\kappa_{R}),\\
    \frac{\textnormal{d}}{\textnormal{d}s} B_{R} &= -\frac{n}{2} \sigma_{R} B_{R} + c_{0}\,A_{R} + c_3 |A_{R}|^2 A_{R} + |\kappa_{R}|^{-3}R_{B}(\kappa_{R}A_{R}, \kappa^2_{R} B_{R},\kappa_{R}\sigma_{R},\kappa_{R}),\\
    \frac{\textnormal{d}}{\textnormal{d} s} \sigma_{R} &= -\sigma_{R}^{2},\\
    		\frac{\textnormal{d}}{\textnormal{d}s}\kappa_{R} &= 0
	\end{split}
\end{equation}
where
\begin{equation}
\begin{split}
     |\kappa_{R}|^{-2} R_{A} ={}& |\kappa_{R}|^2\mathcal{O}([\kappa_{R}^{2} + |\kappa_{R}||\sigma_{R}|^{3} + (|A_{R}| + \kappa_{R}|B_{R}|)^{2}]|B_{R}| + [|\kappa_{R}| + |\sigma_{R}|^{3} + |A_{R}|^{3}]|A_{R}|),\\
     |\kappa_{R}|^{-3} R_{B} ={}& |\kappa_{R}|\mathcal{O}([\kappa_{R}^{2} + |\kappa_{R}||\sigma_{R}|^{3} + (|A_{R}| + \kappa_{R}|B_{R}|)^{2}]|B_{R}| + [|\kappa_{R}| + |\sigma_{R}|^{3} + |A_{R}|^{3}]|A_{R}|). 
\end{split}
\end{equation}
Evaluating \eqref{Normal:Resc} on the invariant subspace $\{\sigma_{R} = s^{-1},\; \kappa_{R} = 0\}$, we obtain the cubic nonautonomous complex Ginzburg--Landau equation
\begin{equation}\label{GL:comp}
    \mathcal{D}_{\frac{n}{2}}\mathcal{D}_{\frac{n}{2}} A_{R}(s) = c_0 A_{R}(s) + c_{3} |A_{R}(s)|^{2} A_{R}(s), \qquad A_{R}\in\mathbb{C},\qquad s\in[0,\infty),
\end{equation}
where, with a slight abuse of notation, we write $\mathcal{D}_{k}u(s)=(\frac{\mathrm{d}}{\mathrm{d}s} + \frac{k}{s})u(s)$ for any $k\in\mathbb{R}$. The equation \eqref{GL:comp} possesses $S^1$-symmetry, and so we reduce to the real subspace $A_{R}\in\mathbb{R}$ and consider solutions of the real Ginzburg--Landau equation
\begin{equation}\label{GL:real}
    \mathcal{D}_{\frac{n}{2}}\mathcal{D}_{\frac{n}{2}} A_{R}(s) = c_0 A_{R}(s) + c_{3} A_{R}(s)^{3}, \qquad A_{R}\in\mathbb{R}, \qquad s\in[0,\infty).
\end{equation}

We present the following lemma regarding the existence of a nondegenerate solution to \eqref{GL:real}, which is required for the existence of ring and spot B solutions.   We hence impose the restriction $0<n<4$, so that we can apply Proposition~\ref{prop:GL-Laplace} and Hypothesis~\ref{hyp:GL-Laplace}, as required for such solutions in Theorem~\ref{thm:SmallPatch}.
\begin{lem}\label{lem:GL}
Fix $0<n<4$, assume that $c_3<0$. Then, the Ginzburg--Landau equation \eqref{GL:real} has a forward-bounded localised solution $A_{R}(s)= \hat{q}(s) := |c_3|^{-\frac{1}{2}}\sqrt{c_0} q(\sqrt{c_0}s)$, with constants $q_n>0$, $p_{n}\neq0$, which depend only on $n$, such that
\begin{equation}\label{q:defn}
    q(s) = \begin{cases}
         q_n s^{\frac{2-n}{2}}\left(1 + \mathcal{O}(s)\right), & s\to 0,\\
         (p_n + \mathcal{O}(\mathrm{e}^{-s}))s^{-\frac{n}{2}}\mathrm{e}^{-s}, & s\to \infty.
    \end{cases}
\end{equation}
In addition, the linearisation of \eqref{GL:real} about $\hat{q}(s)$ does not possess a nontrivial solution that is forward-bounded on $[0,\infty)$. If $c_{3}>0$, then the only forward-bounded solution of \eqref{GL:real} on $[0,\infty)$ is $A_{R}(s)\equiv0$.
\end{lem}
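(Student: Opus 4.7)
The plan is to convert \eqref{GL:real} into the three-dimensional semilinear elliptic equation of Proposition~\ref{prop:GL-Laplace} by a single power change of variables, then read off every conclusion from that proposition. First, the rescaling $A_{R}(s) = |c_3|^{-1/2}\sqrt{c_0}\, q(\sqrt{c_0}s)$ reduces \eqref{GL:real} to the normalised form $\mathcal{D}_{\frac{n}{2}}\mathcal{D}_{\frac{n}{2}} q = q + \mathrm{sgn}(c_3)\, q^3$ (compare \eqref{GL:real;normal}). Next, set $q(s) = s^{(2-n)/2}Q(s)$; using the operator identities \eqref{Dk:ident}, in particular $\mathcal{D}_k(s^j u) = s^j\mathcal{D}_{k+j}u$ and $\mathcal{D}_i\mathcal{D}_j = \mathcal{D}_{j+1}\mathcal{D}_{i-1}$, one computes
\begin{equation*}
\mathcal{D}_{\frac{n}{2}}\mathcal{D}_{\frac{n}{2}} q \;=\; s^{(2-n)/2}\,\mathcal{D}_{1}\mathcal{D}_{1} Q \;=\; s^{(2-n)/2}\,\mathcal{D}_{2}\mathcal{D}_{0} Q \;=\; s^{(2-n)/2}\,\Delta Q,
\end{equation*}
where $\Delta$ is the three-dimensional radial Laplacian. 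Dividing through by $s^{(2-n)/2}$, the equation for $Q$ becomes $\Delta Q = Q + \mathrm{sgn}(c_3)|\mathbf{x}|^{2-n}Q^3$, which for $c_3<0$ is exactly \eqref{GL:real;Laplace}.

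For existence and asymptotics (with $c_3<0$), I would invoke Proposition~\ref{prop:GL-Laplace} (and Hypothesis~\ref{hyp:GL-Laplace} when $3\leq n<4$) to produce a positive radial ground state $Q$. Setting $q_n := Q(0)>0$, the $C^2$ regularity of $Q$ at the origin yields $q(s) = q_n s^{(2-n)/2}(1+\mathcal{O}(s))$ as $s\to 0$. For the behaviour at infinity, the nonlinear term $|\mathbf{x}|^{2-n}Q^3$ decays super-exponentially, so $Q$ is asymptotic to a multiple of the decaying Helmholtz kernel, $Q(s) \sim C\,s^{-1}\mathrm{e}^{-s}$, which gives $q(s)\sim p_n s^{-n/2}\mathrm{e}^{-s}$ with $p_n := C\neq 0$. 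Undoing the rescaling produces $\hat q$ as in the statement.

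For the non-degeneracy claim, the same substitution $V = s^{(2-n)/2}v$ converts the linearisation $\mathcal{D}_{\frac{n}{2}}\mathcal{D}_{\frac{n}{2}}V = V - 3 q^2 V$ into $\mathcal{L}v = 0$, the operator of Proposition~\ref{prop:GL-Laplace}. A Frobenius analysis at $s=0$ shows the two indicial exponents for $V$ are $(2-n)/2$ and $-n/2$, and only the former branch gives a $v = s^{(n-2)/2}V$ that is bounded at the origin; any candidate solution that will eventually be matched with the core manifold from \S\,\ref{subsec:Core} must lie on this regular branch. Combined with the exponential decay of $V$ inherited from forward-boundedness (so that $v \sim C s^{-1}\mathrm{e}^{-s}$ at infinity), this produces a $v$ bounded uniformly on $[0,\infty)$, so Proposition~\ref{prop:GL-Laplace} forces $v\equiv 0$ and hence $V\equiv 0$.

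Finally, for $c_3>0$ the same substitution yields $\Delta Q = Q + |\mathbf{x}|^{2-n}Q^3$ on $\mathbb{R}^3$. Any forward-bounded $A_{R}$ decays exponentially by comparison with the Helmholtz operator, so multiplying by $Q$ and integrating by parts over $\mathbb{R}^3$ gives $-\int|\nabla Q|^2 = \int Q^2 + \int|\mathbf{x}|^{2-n}Q^4$ with vanishing boundary terms, which forces $Q\equiv 0$ and hence $A_{R}\equiv 0$. The main technical obstacle I anticipate is the book-keeping around the substitution: verifying that the ``regular at the origin'' subspace on the Ginzburg--Landau side maps bijectively to bounded radial solutions on the elliptic side, so that Proposition~\ref{prop:GL-Laplace} transfers correctly to both existence and non-degeneracy. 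The decay analysis at infinity and the Pohozaev-style nonexistence argument for $c_3>0$ are comparatively routine once this correspondence is in hand.
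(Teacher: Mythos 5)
Your proposal is correct and follows essentially the same route as the paper: the substitution $q(s)=s^{(2-n)/2}Q(s)$ via the identities \eqref{Dk:ident} to reduce \eqref{GL:real} to the three-dimensional elliptic problem \eqref{GL:real;Laplace}, an appeal to Proposition~\ref{prop:GL-Laplace} (and Hypothesis~\ref{hyp:GL-Laplace} for $3\leq n<4$) for existence and non-degeneracy, and the energy identity obtained by testing against $\hat{u}$ to rule out nontrivial solutions when $c_3>0$. The only cosmetic difference is that the paper obtains the asymptotics \eqref{q:defn} from a variation-of-constants/fixed-point argument rather than from the regularity of $Q$ at the origin and the Helmholtz kernel at infinity, but these are equivalent.
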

\begin{proof}
We note that finding forward-bounded localised solutions to \eqref{GL:real} is equivalent to finding radially-symmetric ground state solutions to
    \begin{equation}\label{GL:Laplace}
        \Delta \hat{u} = \hat{u} + \mathrm{sgn}\,(c_3)|\mathbf{x}|^{2-n} \hat{u}^3, \qquad \hat{u}(\mathbf{x})\in\mathbb{R}, \qquad \mathbf{x}\in\mathbb{R}^{3}.
    \end{equation}
where $\hat{u}(\mathbf{x}) = |c_3|^{\frac{1}{2}} c_0^{\frac{n-4}{4}} |\mathbf{x}|^{\frac{n-2}{2}}A_{R}(c_{0}^{-\frac{1}{2}}|\mathbf{x}|)$. The key result here is that
\begin{equation*}
    \mathcal{D}_{\frac{n}{2}}\mathcal{D}_{\frac{n}{2}}\left(s^{\frac{2-n}{2}} u(s)\right) = s^{\frac{2-n}{2}} \mathcal{D}_{1}\mathcal{D}_{1}u(s)= s^{\frac{2-n}{2}} \Delta_{2} u(s)
\end{equation*}
which follows from the identities presented in \eqref{Dk:ident}. A standard variational argument shows that bounded localised solutions must also satisfy
\begin{equation*}
\begin{split}
    0 ={}& \int_{\mathbb{R}^{3}} \hat{u} \left(-\Delta \hat{u} + \hat{u} + \mathrm{sgn}\,(c_3)|\mathbf{x}|^{2-n} \hat{u}^3\right)\,\mathrm{d}\mathbf{x} = \int_{\mathbb{R}^{3}} \left\{|\nabla \hat{u}|^2 + |\hat{u}|^2 + \mathrm{sgn}\,(c_3)|\mathbf{x}|^{2-n} |\hat{u}|^4\right\}\,\mathrm{d}\mathbf{x},\\
\end{split}
\end{equation*}
and so we see that either $c_{3}<0$ or $\hat{u}\equiv0$. Hence the existence and non-degeneracy of $\hat{q}(s)$ follows from Proposition~\ref{prop:GL-Laplace} for $0<n<3$ and Hypothesis~\ref{hyp:GL-Laplace} for $3\leq n<4$. Finally, the asymptotic behaviour of $q(s)$ in \eqref{q:defn} can be found by noting that $q(s)$ is a solution of the rescaled equation
\begin{equation}\label{q:eqn}
    \mathcal{D}_{\frac{n}{2}}\mathcal{D}_{\frac{n}{2}} \hat{A}(s) = \hat{A}(s) - \hat{A}(s)^{3},
\end{equation}
for which we can write down a variation-of-constants formula and apply a standard fixed-point argument in each limit.
\end{proof}

We note that spot A solutions do not persist as $\mathcal{O}(\mu^{\frac{1}{2}})$ perturbations from the localised solution in Lemma~\ref{lem:GL}, but rather remain close to the linear flow of \eqref{GL:real}.

\begin{rmk}\label{rmk:GL-lin}
    For sufficiently small values of $A_{R}$, solutions of \eqref{GL:real} remain close to exponentially decaying solutions of the linear equation
\begin{equation}\label{GL:lin}
    \mathcal{D}_{\frac{n}{2}}\mathcal{D}_{\frac{n}{2}} A_{R}(s) = c_0 A_{R}(s), \qquad A_{R}\in\mathbb{R},\qquad s\in[0,\infty).
\end{equation}
    which have the explicit form $A_{R}(s) = s^{-\frac{n}{2}}\mathrm{e}^{-\sqrt{c_0}\,s}$.
\end{rmk}

We present the following lemma.
\begin{lem}\label{Lem:Resc;Evo} 
For each fixed choice of $r_{1}>0$, there are constants $a_0,\kappa_{0}>0$ such that there exist exponentially decaying solutions to \eqref{Normal:Resc} for $s\in[r_{1},\infty)$, with the following evaluations at $s=r_{1}$ for all $\mu<\kappa_{0}$, $|a|<a_0$. 
\begin{enumerate}[label=(\roman*)]
    \item  If $|A_{R}|=\mathcal{O}(a)$ as $|(a,\mu)|\to0^+$, then
\begin{equation}\label{Sol:SpotA;r1}
(A_{R}, B_{R}, \sigma_{R}, \kappa_{R})(r_{1}) = \bigg( a r_1^{-\frac{n}{2}}A_{0}, -\sqrt{c_0} a r_1^{-\frac{n}{2}}A_{0},r_1^{-1},\mu^{\frac{1}{2}}\bigg).
\end{equation}
\item If $c_3<0$, $n<4$ and $|A_{R}| = \mathcal{O}(1)$ as $|(a,\mu)|\to0^+$, then
\begin{equation}\label{Sol:Rings;r1}
(A_{R}, B_{R}, \sigma_{R}, \kappa_{R})(r_{1}) = \bigg( r_{1}^{\frac{2-n}{2}}\,A_{1}, r_{1}^{-\frac{n}{2}}\,A_{1},r_{1}^{-1}, \mu^{\frac{1}{2}}\bigg).
\end{equation}
Here we have defined
\begin{equation*}
    A_{0} := \textnormal{e}^{\textnormal{i}Y}(1 + \mathcal{O}(\mu^{\frac{1}{2}} + a^2)), \qquad  A_{1} := |c_3|^{-\frac{1}{2}}c_0^{\frac{4-n}{4}} q_n \,\textnormal{e}^{\textnormal{i}Y}(1 + \mathcal{O}(\mu^{\frac{1}{2}})),
\end{equation*}
and $Y\in\mathbb{R}$ is arbitrary.
\end{enumerate}
\end{lem}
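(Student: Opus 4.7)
The plan is to construct exponentially decaying trajectories of the rescaled system \eqref{Normal:Resc}, restricted to the invariant subspace $\{\sigma_{R}=s^{-1},\,\kappa_{R}=\mu^{1/2}\}$, on the semi-infinite interval $s\in[r_{1},\infty)$, and then read off their values at the inner endpoint $s=r_{1}$. On this subspace the $(A_{R},B_{R})$-part of \eqref{Normal:Resc} is an autonomous, $S^{1}$-symmetric $\mathcal{O}(\mu)$ perturbation of the real Ginzburg--Landau equation \eqref{GL:real}: a direct computation using the bound on $R_{A/B}$ stated after \eqref{Normal:Ext} shows that, in rescaled variables, every term of $|\kappa_{R}|^{-2}R_{A}$ and $|\kappa_{R}|^{-3}R_{B}$ carries a factor of at least $\kappa_{R}^{2}=\mu$, while $\sigma_{R}=1/s$ is bounded by $1/r_{1}$ on $[r_{1},\infty)$. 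Exponentially decaying solutions of \eqref{Normal:Resc} therefore appear as small perturbations of exponentially decaying solutions of \eqref{GL:real}, and the two cases of the lemma correspond to the two branches of such solutions supplied by Remark~\ref{rmk:GL-lin} and Lemma~\ref{lem:GL}, respectively.

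For part (i), I take as leading order the linear decaying solution from Remark~\ref{rmk:GL-lin}, $A_{R}^{\mathrm{lin}}(s) = s^{-n/2}\mathrm{e}^{-\sqrt{c_{0}}\,s}$, which, together with $s^{-n/2}\mathrm{e}^{+\sqrt{c_{0}}\,s}$, spans the fundamental system of $\mathcal{D}_{n/2}\mathcal{D}_{n/2}A = c_{0}A$. Making the ansatz $A_{R}(s) = a\,\mathrm{e}^{\mathrm{i}Y}A_{R}^{\mathrm{lin}}(s) + \tilde A(s)$, with $B_{R}$ treated analogously and $Y\in\mathbb{R}$ free by the $S^{1}$-symmetry, and applying a variation-of-constants formula based on the above fundamental system, I recast \eqref{Normal:Resc} as a fixed-point equation for $(\tilde A,\tilde B)$ in the exponentially weighted space $\{f\in C([r_{1},\infty),\mathbb{C}) : \sup_{s\ge r_{1}} s^{n/2}\mathrm{e}^{\sqrt{c_{0}}\,s}|f(s)|<\infty\}$. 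In this norm the cubic nonlinearity of \eqref{GL:real} and the remainder $R_{A/B}$ together have size $\mathcal{O}(a^{2}+\mu)$, so a contraction-mapping argument yields a unique small $(\tilde A,\tilde B)$ whose values at $s=r_{1}$ are absorbed into the $\mathcal{O}(\mu^{1/2}+a^{2})$ correction defining $A_{0}$, giving \eqref{Sol:SpotA;r1}.

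For part (ii), I take as leading order the non-trivial localized solution $\hat q(s)$ of \eqref{GL:real} supplied by Lemma~\ref{lem:GL}, writing $A_{R}(s) = \mathrm{e}^{\mathrm{i}Y}(\hat q(s) + \tilde A(s))$ and $B_{R}(s) = \mathrm{e}^{\mathrm{i}Y}(\mathcal{D}_{n/2}\hat q(s) + \tilde B(s))$. The equation for $(\tilde A,\tilde B)$ is, at leading order, the linearization $\mathcal{L}_{q}$ of \eqref{GL:real} about $\hat q$ plus an $\mathcal{O}(\mu^{1/2})$ inhomogeneity. Lemma~\ref{lem:GL} states that $\mathcal{L}_{q}$ has no nontrivial forward-bounded solutions on $[0,\infty)$; together with the exponential dichotomy of $\mathcal{L}_{q}$ as $s\to\infty$, this makes $\mathcal{L}_{q}$ an isomorphism on the analogous exponentially weighted space over $[r_{1},\infty)$, and the implicit function theorem produces a unique $(\tilde A,\tilde B) = \mathcal{O}(\mu^{1/2})$. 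Substituting the small-$s$ expansion from \eqref{q:defn} together with $\hat q(s) = |c_{3}|^{-1/2}\sqrt{c_{0}}\,q(\sqrt{c_{0}}s)$ yields $\hat q(r_{1}) = |c_{3}|^{-1/2}c_{0}^{(4-n)/4}q_{n}r_{1}^{(2-n)/2}(1+\mathcal{O}(r_{1}))$ and, after the analogous computation for $\mathcal{D}_{n/2}\hat q(r_{1})$, the values stated in \eqref{Sol:Rings;r1}.

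The main technical obstacle is arranging the weighted function spaces so that the variation-of-constants operator (respectively $\mathcal{L}_{q}^{-1}$) and the remainder bound interact correctly: in particular, in part (ii) one has to check that the inhomogeneity generated by $R_{A/B}$ and by the cubic nonlinearity lies in the same weighted space on which $\mathcal{L}_{q}^{-1}$ is bounded, uniformly in $\mu$. The restriction $n<4$ enters precisely through the leading behaviour $\hat q(s)\sim q_{n}s^{(2-n)/2}$ as $s\to 0$ supplied by \eqref{q:defn}, which governs how large the inhomogeneity can be near the inner endpoint $s=r_{1}$ and is what allows the fixed-point / implicit function theorem argument to close.
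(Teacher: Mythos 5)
Your proposal is correct and follows essentially the same route as the paper: restrict to the invariant subspace $\{(\sigma_R,\kappa_R)=(s^{-1},\mu^{1/2})\}$, view \eqref{Normal:Resc} as a perturbation of the Ginzburg--Landau equation \eqref{GL:real}, and perturb off the linear decaying solution of Remark~\ref{rmk:GL-lin} in case (i) or the nondegenerate localised solution $\hat q$ of Lemma~\ref{lem:GL} in case (ii), before evaluating at $s=r_1$ via \eqref{q:defn}; your weighted-space contraction/implicit-function setup merely makes explicit what the paper's condensed proof leaves implicit. One small caveat: the restriction $n<4$ enters through the existence and nondegeneracy of $\hat q$ in Lemma~\ref{lem:GL} itself (via Proposition~\ref{prop:GL-Laplace} and Hypothesis~\ref{hyp:GL-Laplace}), not through the size of the inhomogeneity near the fixed inner endpoint $s=r_1$ as your closing remark suggests.
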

\begin{proof}
    We first note that $\kappa_{R}$ acts as a parameter, and $\Gamma_{c}:=\{(\sigma_{R},\kappa_{R}) = (\tfrac{1}{s},\sqrt{c})\}$ is an invariant subspace of \eqref{Normal:Resc} for any fixed $c\geq0$. Evaluating \eqref{Normal:Resc} on $\Gamma_{0}$, we arrive at the system
\begin{equation*}
	\begin{split}
    		 \mathcal{D}_{\frac{n}{2}} A_{R}(s) &= B_{R}(s),\\
    \mathcal{D}_{\frac{n}{2}} B_{R}(s) &= c_{0}\,A_{R}(s) + c_3 |A_{R}(s)|^2 A_{R}(s),\\
	\end{split}
\end{equation*}
which reduces to the complex cubic non-autonomous Ginzburg--Landau equation \eqref{GL:comp}.
\begin{enumerate}[label=(\roman*)]
    \item Assuming $|A_{R}|=\mathcal{O}(|a|)$ with $|a|\ll1$ and recalling Remark~\ref{rmk:GL-lin}, the equation \eqref{GL:comp} possesses an exponentially decaying solution for $s\in[r_1,\infty)$ of the form
    \begin{equation}\label{Sol:spotA;Resc}
    \left(A_{R},B_{R},\sigma_{R},\kappa_{R}\right)(s) = \left(a s^{-\frac{n}{2}}\mathrm{e}^{\sqrt{c_0}(r_1 - s)}A_{0} , -\sqrt{c_0} a s^{-\frac{n}{2}}\mathrm{e}^{\sqrt{c_0}(r_1 - s)}A_{0}, s^{-1},\mu^{\frac{1}{2}}\right),
\end{equation}
    for some $a\in\mathbb{R}$, $|a|\ll1$, where the $\mathcal{O}(\mu^{\frac{1}{2}} + a^2)$ terms are due to the $\mathcal{O}(|\kappa_{R}|)$ and $\mathcal{O}(|A_{R}|^2 A_{R})$ terms in \eqref{Normal:Resc}.

\item Assuming that $c_{3}<0$ and $n<4$, it follows from Lemma~\ref{lem:GL} that the equation \eqref{GL:comp} possesses an exponentially decaying solutions for $s\in[r_1,\infty)$ of the form
\begin{equation}\label{Sol:ring;Resc}
    \left(A_{R},B_{R},\sigma_{R},\kappa_{R}\right)(s) = \left(\hat{q}(s)\,\mathrm{e}^{\mathrm{i}Y}(1 + \mathcal{O}(\mu^{\frac{1}{2}})),\mathcal{D}_{\frac{n}{2}}\hat{q}(s)\,\mathrm{e}^{\mathrm{i}Y}(1 + \mathcal{O}(\mu^{\frac{1}{2}})),\tfrac{1}{s},\mu^{\frac{1}{2}}\right),
\end{equation}
where the $\mathcal{O}(\mu^{\frac{1}{2}})$ terms are due to the $\mathcal{O}(|\kappa_{R}|)$ terms in \eqref{Normal:Resc}. 
\end{enumerate}
Evaluating \eqref{Sol:spotA;Resc} and \eqref{Sol:ring;Resc} at $s=r_1\ll1$ and (in the case of $(ii)$) applying the asymptotic properties of $\hat{q}(s)=|c_3|^{-\frac{1}{2}}\sqrt{c_0} q(\sqrt{c_0}s)$ from \eqref{q:defn}, we arrive at our final result.
\end{proof}
  After inverting the rescaling transformation \eqref{resc}, exponentially decaying solutions to \eqref{Normal:Ext} evaluated at $r=r_{1}\mu^{-\frac{1}{2}}$ then either take the form
\begin{equation}\label{Sol:SpotA;delta0} 
    (A, B, \sigma, \kappa)(r_{1}\mu^{-\frac{1}{2}}) = \bigg( a \mu^{\frac{1}{2}} r_1^{-\frac{n}{2}}A_{0}, -\sqrt{c_0} a \mu r_1^{-\frac{n}{2}}A_{0}, \mu^{\frac{1}{2}} r_1^{-1},\mu^{\frac{1}{2}}\bigg),  
\end{equation}
or, if $c_3<0$ and $n<4$,
\begin{equation}\label{Sol:Rings;delta0} 
    (A, B, \sigma, \kappa)(r_{1}\mu^{-\frac{1}{2}}) = \bigg( \mu^{\frac{1}{2}} r_{1}^{\frac{2-n}{2}}\,A_{1}, \mu r_{1}^{-\frac{n}{2}}\,A_{1}, \mu^{\frac{1}{2}} r_{1}^{-1}, \mu^{\frac{1}{2}}\bigg),  
\end{equation}
for $a,\mu$ sufficiently small.

\subsection{The Transition Chart}\label{subsec:Transition}

  So far, we have identified exponentially decaying trajectories in \eqref{Normal:Ext} for $r>r_{1}\mu^{-\frac{1}{2}}$. To connect these trajectories with the core manifold constructed in \S\ref{subsec:Core}, we must track solutions with boundary data \eqref{Sol:SpotA;delta0} and \eqref{Sol:Rings;delta0} backwards through the `transition' region $r_{0}\leq r\leq r_{1}\mu^{-\frac{1}{2}}$ to the boundary of the core manifold at $r=r_0$.

\begin{lem} 
For each fixed choice of $0<r_{1}, r_{0}^{-1}\ll1$, there are constants $a_0,\kappa_{0}>0$ such that solutions of \eqref{Normal:Ext}, evaluated at $r=r_{0}$, $\kappa(r_0)=\mu^{\frac{1}{2}}$, for initial values \textnormal{(\ref{Sol:SpotA;delta0}-\ref{Sol:Rings;delta0})} are given by the following forms for all $\mu<\kappa_{0}$, $|a|<a_0$ and $Y\in\mathbb{R}$.
\begin{enumerate}[label=(\roman*)]
    \item For initial value \eqref{Sol:SpotA;delta0}, we obtain a solution of the form
    \begin{equation}\label{Sol:SpotA;r0}
	\begin{split}
   		A(r_{0}) &= \mu^{\frac{2-n}{4}} r_0^{-\frac{n}{2}}a\mathrm{e}^{\mathrm{i}Y}(1 + \xi_0),\\
      		B(r_{0}) &= -\sqrt{c_0} \mu^{\frac{4-n}{4}}r_0^{-\frac{n}{2}}a\left[1+\frac{c_3}{\sqrt{c_0}}\frac{a^2}{\mathcal{E}^2_n(\mu)} \right]  \mathrm{e}^{\mathrm{i}Y}(1 + \xi_0). \\
	\end{split}
\end{equation}
\item For initial value \eqref{Sol:Rings;delta0} with $c_3<0$ and $n<4$, we obtain a solution of the form
\begin{equation}\label{Sol:Rings;r0}
	\begin{split}
   		A(r_{0}) &= |c_3|^{-\frac{1}{2}} c_0^{\frac{4-n}{4}} q_n\mu^{\frac{4-n}{4}} r_0^{\frac{2-n}{2}}\textnormal{e}^{\textnormal{i}Y}(1 + \xi_1),\\
      		B(r_{0}) &= |c_3|^{-\frac{1}{2}} c_0^{\frac{4-n}{4}} q_n\mu^{\frac{4-n}{4}}r_0^{-\frac{n}{2}}\textnormal{e}^{\textnormal{i}Y}(1 + \xi_1). \\
	\end{split}
\end{equation}
\item Additionally, for initial value \eqref{Sol:Rings;delta0} with $c_3<0$ and $n<4$, we also obtain a solution of the form
\begin{equation}\label{Sol:SpotB;r0}
	\begin{split}
   		A(r_{0}) &= a |c_3|^{-\frac{1}{2}} c_0^{\frac{4-n}{4}} q_n\mu^{\frac{4-n}{8}} r_{0}^{-\frac{n}{2}} \textnormal{e}^{\textnormal{i}Y}(1 + \xi_{2}),\\
      		B(r_{0}) &= |c_3|^{-\frac{1}{2}} c_0^{\frac{4-n}{4}} q_n\mu^{\frac{4-n}{4}}r_{0}^{-\frac{n}{2}} \textnormal{e}^{\textnormal{i}Y}(1 + \xi_{2}). \\
	\end{split}
\end{equation}
\end{enumerate}
Here we have defined the nonlinear function
\begin{equation*}
    \mathcal{E}_{n}(\mu):
    = \mu^{\frac{n-1}{4}}\left(\frac{(n-1)(\mu^{-\frac{1}{2}}r_1)^{n-1} r_0^{n-1}}{\left(\mu^{-\frac{1}{2}} r_1\right)^{n-1} - r_0^{n-1}}\right)^{\frac{1}{2}}
    = \begin{cases}
        \mathcal{O}_{r_0,r_1}(1), & 0<n<1,\\
        \mathcal{O}_{r_0,r_1}(|\log(\mu)|^{-\frac{1}{2}}), & n=1,\\
        \mathcal{O}_{r_0,r_1}(\mu^{\frac{n-1}{4}}), & n>1,\\
    \end{cases}
\end{equation*}
and introduced
\begin{equation*}
    \xi_0 := \mathcal{O}(\mu^{\frac{1}{2}} + a^2 + r_1 + r_0^{-1}), \qquad  \xi_{1} :=  \mathcal{O}(\mu^{\frac{1}{2}} + r_1 + r_0^{-1}), \qquad  \xi_{2} :=  \mathcal{O}(\mu^{\frac{n}{2}} + \mu^{\frac{4-n}{8}} + |a| + r_1 + r_0^{-1}),
\end{equation*}
for the remainder terms in \textnormal{(\ref{Sol:SpotA;r0}-\ref{Sol:SpotB;r0})}.
\label{Lem:Transition;Evo}
\end{lem}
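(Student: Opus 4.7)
My plan is to reduce the backward flow of \eqref{Normal:Ext} on the invariant slice $\{\sigma = r^{-1},\,\kappa = \sqrt{\mu}\}$ to a scalar nonlinear integral equation for $A$ and solve it by contraction on $[r_0, r_1\mu^{-1/2}]$ with prescribed right-endpoint data. Eliminating $B = \mathcal{D}_{n/2}A - R_A$ from the first line of \eqref{Normal:Ext} produces
\begin{equation*}
    \mathcal{D}_{n/2}\mathcal{D}_{n/2}\,A \;=\; c_0\mu\, A + c_3|A|^{2}A + \widetilde{R}\bigl(A,\mathcal{D}_{n/2}A, r^{-1}, \mu\bigr),
\end{equation*}
where $\widetilde{R} := R_B + \mathcal{D}_{n/2}R_A$ inherits the bounds from Lemma~\ref{Lemma:normal}. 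The linearisation factors as $\mathcal{D}_{n/2}^{2} - c_0\mu = r^{-n/2}(\partial_r^{2} - c_0\mu)r^{n/2}$, so the fundamental solutions are the explicit pair $A_\pm(r) = r^{-n/2}\mathrm{e}^{\pm\sqrt{c_0\mu}\,r}$ with $\mathcal{D}_{n/2}A_\pm = \pm\sqrt{c_0\mu}\,A_\pm$, and the scaled Wronskian $A_+\,\mathcal{D}_{n/2}A_- - A_-\,\mathcal{D}_{n/2}A_+ = -2\sqrt{c_0\mu}\,r^{-n}$ endows the associated Green's function with an $r^{n}$ measure that proves decisive for the fold correction.

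I would then write the solution as $A = A_{\mathrm{lin}} + \mathcal{N}[A]$, where $A_{\mathrm{lin}}$ is the linear carrier whose $A_\pm$-coefficients are fixed by the right-endpoint data \eqref{Sol:SpotA;delta0} or \eqref{Sol:Rings;delta0} together with the matching to the purely exponentially decaying carrier for $r \geq r_1\mu^{-1/2}$ furnished by Lemma~\ref{Lem:Resc;Evo}, and $\mathcal{N}[A]$ integrates $c_3|A|^{2}A + \widetilde R$ against the Green's function. A standard contraction argument in the weighted sup-norm $\|A\|_{\ast} := \sup_{r \in [r_0, r_1\mu^{-1/2}]} r^{n/2}|A(r)|$ produces a unique fixed point on a ball whose radius is tuned to the expected scaling---of order $|a|\mu^{(2-n)/4}$ in case (i), of order $\mu^{(4-n)/4}$ in case (ii), and of order $|a|\mu^{(4-n)/8}$ in case (iii)---provided $\mu$, $|a|$, $r_1$ and $r_0^{-1}$ are all sufficiently small.

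Evaluating the fixed point at $r=r_0$ then delivers each formula. In case (i) the linear carrier alone gives $A(r_0) = a\mu^{(2-n)/4}r_0^{-n/2}\mathrm{e}^{\mathrm{i}Y}(1+\xi_0)$; since $A_\pm(r_0)$ agree to leading order while $\mathcal{D}_{n/2}A_\pm(r_0)$ differ by a sign, the cubic source $c_3|A|^{2}A$ contributes symmetrically to $A(r_0)$ (absorbed into $\xi_0$) but asymmetrically to $B(r_0)=\mathcal{D}_{n/2}A(r_0)$, where the Green's function integral of the leading $c_3 a^{3}\mu^{3(2-n)/4}|A_-|^{2}A_-$ reduces to $\tfrac{c_3 a^{3}}{2\sqrt{c_0}}\mu^{(2-n)/4}r_0^{-n/2}\int_{r_0}^{r_1\mu^{-1/2}} s^{-n}\,\mathrm{d} s$, and the elementary antiderivative of $s^{-n}$ produces $\mu^{(n-1)/2}\mathcal{E}_n(\mu)^{-2}$ and hence the fold factor $c_3 a^{2}/[\sqrt{c_0}\mathcal{E}_n^{2}]$. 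For case (ii) the ring data already encode the ground-state asymptotic $q(s)\sim q_n s^{(2-n)/2}$ from \eqref{q:defn}; since $r^{(2-n)/2}$ lies in the kernel of the free operator $\mathcal{D}_{n/2}^{2}$, the profile is transported rigidly to $r_0$ up to $\mathcal{O}(\mu^{1/2} + r_1 + r_0^{-1})$ perturbations from $c_0\mu A$, the cubic interaction, and $\widetilde R$. Case (iii) is a second branch of the same fixed-point problem in which the amplitude $|a|\mu^{(4-n)/8}$ is chosen so that the linear decaying mode $A_-$ dominates the behaviour at $r_0$ while the cubic interaction with the ring tail remains subleading; tracking the $\mu$-exponents then reproduces \eqref{Sol:SpotB;r0}.

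The main obstacle will be tracking the spot A fold correction uniformly in $n$, because $\int_{r_0}^{r_1\mu^{-1/2}} s^{-n}\,\mathrm{d} s$ is bounded for $n<1$, logarithmic at $n=1$, and of size $\mu^{(n-1)/2}$ for $n>1$, so the weighted norm and the contraction estimates must accommodate this sub-leading growing-mode contribution at precisely the right order---especially near $n=1$, where a $|\log\mu|$ factor must be absorbed into the error bookkeeping. Secondarily, the restriction $n<4$ in cases (ii)--(iii) is exactly what ensures the cubic source $c_3|A|^{2}A$ dominates the geometric remainder $r^{-3}|A|$ in the relevant scalings, so that the contraction closes with the $\hat q$-type amplitude as the leading term.
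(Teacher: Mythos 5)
Your overall strategy---backward integration over the transition region $[r_0,r_1\mu^{-1/2}]$ by a contraction argument, with the fold correction emerging from the elementary integral $\int_{r_0}^{r_1\mu^{-1/2}}s^{-n}\,\mathrm{d}s$ that defines $\mathcal{E}_n(\mu)^{-2}$---is the same as the paper's; you package it as a scalar second-order problem with the explicit Green's function built from $r^{-n/2}\mathrm{e}^{\pm\sqrt{c_0\mu}\,r}$, whereas the paper keeps the first-order system and works in the weighted transition coordinates $A_T=r^{n/2}A$ (cases (i), (iii)) and $\widetilde{A}_T=r^{(n-2)/2}A$ (case (ii)). Cases (i) and (ii) of your argument would go through. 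One bookkeeping caveat: a single weighted norm $\|A\|_*=\sup r^{n/2}|A|$ cannot serve all three cases, since the ring carrier $\mu^{\frac{4-n}{4}}r^{\frac{2-n}{2}}$ has $\|\cdot\|_*$-size $r_1\mu^{\frac{2-n}{4}}$, which is not small for $2<n<4$; you need the case-dependent weight $r^{(n-2)/2}$ there, which is precisely why the paper introduces two different sets of transition coordinates.

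The genuine gap is case (iii). You describe spot B as ``a second branch of the same fixed-point problem in which the amplitude $|a|\mu^{(4-n)/8}$ is chosen so that the linear decaying mode $A_-$ dominates,'' but a contraction mapping with prescribed right-endpoint data \eqref{Sol:Rings;delta0} has a \emph{unique} fixed point in its ball---there is no free amplitude left to choose, so as stated your setup produces only the ring solution \eqref{Sol:Rings;r0}. The extra degree of freedom has to be introduced explicitly. The paper does this (following McCalla--Sandstede) by splitting the interval at an intermediate radius $r=a\mu^{-\frac{4-n}{8}}$ with $r_0<a\mu^{-\frac{4-n}{8}}<r_1\mu^{-\frac{1}{2}}$: one first integrates backwards in the ring coordinates $(\widetilde{A}_T,\widetilde{B}_T)$ from $r_1\mu^{-1/2}$ down to $a\mu^{-\frac{4-n}{8}}$, obtaining $A_T=r\widetilde{A}_T=a\mu^{\frac{4-n}{8}}\widetilde{A}_1$ there, and then switches to the spot coordinates $(A_T,B_T)$ and runs a second contraction down to $r_0$; the parameter $a$ labelling the transition radius is what becomes the free amplitude matched against the core in Lemma~\ref{Lem:Match}(iv). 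Without this two-stage construction (or an equivalent identification of the free parameter hidden in the $\mathcal{O}(\mu^{1/2})$ family of decaying far-field solutions near $\hat q$), the scaling $\mu^{\frac{4-n}{8}}$ and the error $\xi_2$ in \eqref{Sol:SpotB;r0} cannot be derived, so you should add this step.
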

\begin{proof}
  We note that the equations for $\kappa$ and $\sigma$ in \eqref{Normal:Ext} decouple from $A$ and $B$, and so we begin by solving \eqref{Normal:Ext} for $\kappa$ and $\sigma$ with initial data \textnormal{(\ref{Sol:SpotA;delta0}-\ref{Sol:Rings;delta0})}, giving
\begin{equation*}
    \kappa(r) = \kappa\left(r_{1}\mu^{-\frac{1}{2}}\right) = \mu^{\frac{1}{2}}, \quad \quad \sigma(r) = \frac{1}{\left(r-r_{1}\mu^{-\frac{1}{2}}\right) + \sigma\left(r_{1}\mu^{-\frac{1}{2}}\right)^{-1}} = \frac{1}{r}.
\end{equation*}
Our approach is then as follows. Given some fixed choice of $R_{-}, R_{+}>0$ with  $R_{-}<R_{+}$, we integrate backwards over $R_{-}\leq r\leq R_{+}$, such that \eqref{Normal:Ext} (evaluated at $\kappa=\mu^{\frac{1}{2}}$, $\sigma = \frac{1}{r}$) then becomes the integral equation
\begin{equation}\label{Normal:Transition;R+-}
	\begin{split}
    	A(r) &= A(R_{+}) +  \int_{R_{+}}^{r} \left\{- \frac{n}{2p} A(p) + B(p) + R_{A}(A(p), B(p), p^{-1}, \mu^{\frac{1}{2}})\right\}\,\mathrm{d}p,\\
    B(r) &= B(R_{+}) + \int_{R_{+}}^{r} \left\{ -\frac{n}{2p} B(p) + c_{0}\,\mu A(p) + c_3 |A(p)|^2 A(p) + R_{B}(A(p), B(p),p^{-1},\mu^{\frac{1}{2}})\right\}\,\mathrm{d}p.\\
	\end{split}
\end{equation}  
For sufficiently small values of $(A(R_+), B(R_+))$, we can apply the contraction mapping principle to show that \eqref{Normal:Transition;R+-} has a unique solution $(A,B)$ in an appropriate small ball centred at the origin in $C([R_{-},R_{+}], \mathbb{C}^{2})$; see, for example, \cite{Sandstede1997Convergence}. Furthermore, we can express the unique solution to \eqref{Normal:Transition;R+-} evaluated at the point $r=R_{-}$ as a perturbation from $(A(R_{+}), B(R_{+}))$. \par\,

For each initial value \textnormal{(\ref{Sol:SpotA;delta0}, \ref{Sol:Rings;delta0})}, we utilise this approach in conjunction with appropriate choices of $R_-$, $R_+$ and coordinate transformations for $(A,B)$ in order to obtain our expressions for $(A(r_0), B(r_0))$.
\begin{enumerate}[label=(\roman*)]
    \item We begin by considering solutions with initial data \eqref{Sol:SpotA;delta0}. We introduce the following transition coordinates
\begin{align}
    A_{T} := r^{\frac{n}{2}}A, \qquad B_{T} := r^{\frac{n}{2}}B, \label{SpotA:Transition}
\end{align}
and integrate over $r_{0}\leq r\leq r_{1}\mu^{-\frac{1}{2}}$, so that \eqref{Normal:Transition;R+-} is replaced by
\begin{equation*}
	\begin{split}
 A_T(r)  &= a \mu^{\frac{2-n}{4}} A_{0} + \int_{r_1\mu^{-\frac{1}{2}}}^{r} \left\{B_{T}(p) + R_{A,T}(A_{T}, B_{T};p,\mu^{\frac{1}{2}})\right\}\,\mathrm{d}p,\\
    B_T(r) &= -\sqrt{c_0} a \mu^{\frac{4-n}{4}} A_{0} + \int_{r_1\mu^{-\frac{1}{2}}}^{r} \left\{c_{0}\,\mu A_{T}(p) + c_3 p^{-n} |A_{T}(p)|^2 A_{T}(p) + R_{B,T}(A_{T}, B_{T};p,\mu^{\frac{1}{2}})\right\}\,\mathrm{d}p,\\
	\end{split}
\end{equation*}
where 
\begin{equation*}\begin{split}
    R_{A,T}(A_T,B_T;r,\mu^{\frac{1}{2}}) ={}& r^{\frac{n}{2}} R_{A}(r^{-\frac{n}{2}}A_T, r^{-\frac{n}{2}}B_T,r^{-1},\mu^{\frac{1}{2}}), \\
    ={}& \mathcal{O}([\mu^{2} + r^{-3} + r^{-n}(|A_T| + |B_T|)^{2}]|B_T| + [\mu^{2} + r^{-3} + r^{-\frac{3n}{2}}|A_T|^{3}]|A_T|), \\
    R_{B,T}(A_T,B_T;r,\mu^{\frac{1}{2}}) ={}& r^{\frac{n}{2}} R_{B}(r^{-\frac{n}{2}}A_T, r^{-\frac{n}{2}}B_T,r^{-1},\mu^{\frac{1}{2}}),\\
    ={}& \mathcal{O}([\mu^{2} + r^{-3} + r^{-n}(|A_T| + |B_T|)^{2}]|B_T| + [\mu^{2} + r^{-3} + r^{-\frac{3n}{2}}|A_T|^{3}]|A_T|).
\end{split}
\end{equation*}
For sufficiently small values of $\mu$, we obtain a unique solution $(A_T, B_T)$ whose evaluation at the point $r=r_0$ has the following form
\begin{equation}\label{Transition:SpotA;Initial}
    A_{T}(r_{0}) = \mu^{\frac{2-n}{4}} a\mathrm{e}^{\mathrm{i}Y}(1 + \xi_0), \quad B_{T}(r_{0}) = -\sqrt{c_0} \mu^{\frac{4-n}{4}} a\left[1+\frac{c_3}{\sqrt{c_0}}\frac{a^2}{\mathcal{E}_{n}(\mu)^2} \right]  \mathrm{e}^{\mathrm{i}Y}(1 + \xi_0).
\end{equation}
where we define $\mathcal{E}_{n}(\mu)$ such that
\begin{equation*}
    \frac{1}{\mathcal{E}_{n}(\mu)^{2}} = \mu^{\frac{1-n}{2}}\int_{r_1\mu^{-\frac{1}{2}}}^{r_0} p^{-n} \,\mathrm{d}p = \frac{(r_1\mu^{-\frac{1}{2}})^{n-1}-r_0^{n-1}}{(n-1)(r_0 r_1)^{n-1}}.
\end{equation*}
\item We now consider solutions with initial values \eqref{Sol:Rings;delta0}. We introduce the following transition coordinates
\begin{align}
    \widetilde{A}_{T} := r^{\frac{n-2}{2}}A, \qquad \widetilde{B}_{T} := r^{\frac{n}{2}}B, \label{Rings;Transition}
\end{align}
and integrate over $r_0\leq r\leq r_1 \mu^{-\frac{1}{2}}$, so that \eqref{Normal:Transition;R+-} is replaced by
\begin{equation*}
	\begin{split}
 \widetilde{A}_T(r)  &= \mu^{\frac{4-n}{4}}A_1 + \int_{r_1\mu^{-\frac{1}{2}}}^{r} \left\{- p^{-1} \widetilde{A}_T(p) +  p^{-1} \widetilde{B}_T(p) + R_{A,T}(\widetilde{A}_T, \widetilde{B}_T;p,\mu^{\frac{1}{2}})\right\}\,\mathrm{d}p,\\
    \widetilde{B}_T(r) &= \mu^{\frac{4-n}{4}}A_1 + \int_{r_1\mu^{-\frac{1}{2}}}^{r} \left\{c_{0}\,\mu p \widetilde{A}_T(p) + c_3 p^{3-n} |\widetilde{A}_T(p)|^2 \widetilde{A}_T(p) + R_{B,T}(\widetilde{A}_T, \widetilde{B}_T;p,\mu^{\frac{1}{2}})\right\}\,\mathrm{d}p.\\
	\end{split}
\end{equation*}
where 
\begin{equation*}\begin{split}
    R_{A,T}(\widetilde{A}_T,\widetilde{B}_T;r,\mu^{\frac{1}{2}}) ={}& r^{\frac{n-2}{2}} R_{A}(r^{\frac{2-n}{2}}\widetilde{A}_T, r^{-\frac{n}{2}}\widetilde{B}_T,r^{-1},\mu^{\frac{1}{2}}), \\
    ={}& \mathcal{O}(r^{-1}[\mu^{2} + r^{-3} + r^{-n}(r|\widetilde{A}_T| + |\widetilde{B}_T|)^{2}]|\widetilde{B}_T| + [\mu^{2} + r^{-3} + r^{\frac{3(2-n)}{2}}|\widetilde{A}_T|^{3}]|\widetilde{A}_T|), \\
    R_{B,T}(\widetilde{A}_T,\widetilde{B}_T;r,\mu^{\frac{1}{2}}) ={}& r^{\frac{n}{2}} R_{B}(r^{\frac{2-n}{2}}\widetilde{A}_T, r^{-\frac{n}{2}}\widetilde{B}_T,r^{-1},\mu^{\frac{1}{2}}),\\
    ={}& \mathcal{O}([\mu^{2} + r^{-3} + r^{-n}(r|\widetilde{A}_T| + |\widetilde{B}_T|)^{2}]|\widetilde{B}_T| + r[\mu^{2} + r^{-3} + r^{\frac{3(2-n)}{2}}|\widetilde{A}_T|^{3}]|\widetilde{A}_T|).
\end{split}
\end{equation*}
For sufficiently small values of $\mu$, we obtain a unique solution $(\widetilde{A}_T, \widetilde{B}_T)$ whose evaluation at the point $r=r_0$ has the following form
\begin{equation}\label{Transition:Rings;Initial}
    \widetilde{A}_{T}(r_{0}) = |c_3|^{-\frac{1}{2}} c_0^{\frac{4-n}{4}} q_n\mu^{\frac{4-n}{4}} \textnormal{e}^{\textnormal{i}Y}(1 + \xi_{1}), \quad \widetilde{B}_{T}(r_{0}) = |c_3|^{-\frac{1}{2}} c_0^{\frac{4-n}{4}} q_n\mu^{\frac{4-n}{4}}\textnormal{e}^{\textnormal{i}Y}(1 + \xi_{1}).
\end{equation}
\item Finally, we consider solutions with initial values \eqref{Sol:Rings;delta0} which transition between $(\widetilde{A}_{T}, \widetilde{B}_{T})$ and $(A_{T}, B_{T})$ coordinates during their evolution in $r$, as observed in \cite{mccalla2013spots}. To do this we introduce some $a>0$ such that $r_0 < a\mu^{-\frac{4-n}{8}} < r_1 \mu^{-\frac{1}{2}}$. We then consider the transition coordinates $(\widetilde{A}_{T}, \widetilde{B}_{T})$ as defined in \eqref{Rings;Transition} and integrate over $a\mu^{-\frac{4-n}{8}}\leq r\leq r_{1}\mu^{-\frac{1}{2}}$, so that \eqref{Normal:Transition;R+-} is replaced by
\begin{equation*}
	\begin{split}
 \widetilde{A}_T(r)  &= \mu^{\frac{4-n}{4}}A_1 + \int_{r_1\mu^{-\frac{1}{2}}}^{r} \left\{- p^{-1} \widetilde{A}_T(p) +  p^{-1} \widetilde{B}_T(p) + R_{A,T}(\widetilde{A}_T, \widetilde{B}_T;p,\mu^{\frac{1}{2}})\right\}\,\mathrm{d}p,\\
    \widetilde{B}_T(r) &= \mu^{\frac{4-n}{4}}A_1 + \int_{r_1\mu^{-\frac{1}{2}}}^{r} \left\{c_{0}\,\mu p \widetilde{A}_T(p) + c_3 p^{3-n} |\widetilde{A}_T(p)|^2 \widetilde{A}_T(p) + R_{B,T}(\widetilde{A}_T, \widetilde{B}_T;p,\mu^{\frac{1}{2}})\right\}\,\mathrm{d}p.\\
	\end{split}
\end{equation*}
As we saw previously, for sufficiently small values of $\mu$ we obtain a unique solution for $(\widetilde{A}_{T}, \widetilde{B}_{T})$ whose evaluation at the point $r=a\mu^{-\frac{4-n}{8}}$ has the following form
\begin{equation}
    \widetilde{A}_{T}(a\mu^{-\frac{4-n}{8}}) = \mu^{\frac{4-n}{4}} \widetilde{A}_1, \quad \widetilde{B}_{T}(a\mu^{-\frac{4-n}{8}}) = \mu^{\frac{4-n}{4}}\widetilde{A}_{1},
\end{equation}
where we have defined $\widetilde{A}_1:=|c_3|^{-\frac{1}{2}}c_0^{\frac{4-n}{4}} q_n \,\textnormal{e}^{\textnormal{i}Y}(1 + \mathcal{O}(\mu^{\frac{4-n}{8}} + r_1))$. Transforming into the other transition coordinates $(A_T, B_T)$, defined in \eqref{SpotA:Transition}, and integrating over $r_0 \leq r \leq a \mu^{-\frac{4-n}{8}}$, \eqref{Normal:Transition;R+-} is again replaced by
\begin{equation*}
	\begin{split}
 A_T(r)  &= a\mu^{\frac{4-n}{8}} \widetilde{A}_1 + \int_{a \mu^{-\frac{4-n}{8}}}^{r} \left\{B_{T}(p) + R_{A,T}(A_{T}, B_{T};p,\mu^{\frac{1}{2}})\right\}\,\mathrm{d}p,\\
    B_T(r) &= \mu^{\frac{4-n}{4}} \widetilde{A}_1 + \int_{a \mu^{-\frac{4-n}{8}}}^{r} \left\{c_{0}\,\mu A_{T}(p) + c_3 p^{-n} |A_{T}(p)|^2 A_{T}(p) + R_{B,T}(A_{T}, B_{T};p,\mu^{\frac{1}{2}})\right\}\,\mathrm{d}p,\\
	\end{split}
\end{equation*}

Finally, for sufficiently small values of $\mu$ we obtain a unique solution for $(A_{T}, B_{T})$ whose evaluation at the point $r=r_0$ has the following form
\begin{equation}\label{Transition:SpotB;Initial}
    A_{T}(r_0) = a |c_3|^{-\frac{1}{2}} c_0^{\frac{4-n}{4}} q_n\mu^{\frac{4-n}{8}} \textnormal{e}^{\textnormal{i}Y}(1 + \xi_{2}), \quad B_{T}(r_0) = |c_3|^{-\frac{1}{2}} c_0^{\frac{4-n}{4}} q_n\mu^{\frac{4-n}{4}}\textnormal{e}^{\textnormal{i}Y}(1 + \xi_{2}).
\end{equation}
\end{enumerate}
Inverting the transformations \textnormal{(\ref{SpotA:Transition}, \ref{Rings;Transition})} brings us to the desired result.
\end{proof} 

\subsection{Matching Core and Far-field Manifolds}\label{subsec:Matching}
In the previous subsection we have obtained a parametrisation for exponentially-decaying trajectories of \eqref{R-D:Farf}, which lie on the far-field manifold $\mathcal{W}^{s}_{+}(\mu)$, evaluated at the fixed point $r=r_0$. We now seek intersections of these trajectories with the parametrisation of the core manifold $\mathcal{W}^{cu}_{-}(\mu)$, evaluated at $r=r_0$, defined in \eqref{Core:un}. We recall from Remark~\ref{rmk:Core} that the core manifold $\mathcal{W}^{cu}_{-}(\mu)$, following the normal form transformations \eqref{Ntup:Normal;transf} can be written as
\begin{equation}\label{Core:Am;param}
	\begin{split}
   \widetilde{A}(r_0) &= \frac{1}{2}r_0^{-\frac{n}{2}}\mathrm{e}^{\mathrm{i}Y_n}\left[d_1 \left(1 + \mathcal{O}(r_0^{-1})\right) -\mathrm{i}d_2  r_0\left(1 + \mathcal{O}(r_0^{-1})\right) + \mathcal{O}_{r_0}(|\mu||\mathbf{d}| + |\mathbf{d}|^{2})\right]\\
   \widetilde{B}(r_0) &= -\frac{1}{2}r_0^{-\frac{n}{2}}\mathrm{e}^{\mathrm{i}Y_n}\left[ \gamma\, d_1^2\left(\nu_n + \mathcal{O}\left(r_0^{-\frac{n}{2}}+r_0^{-1}\right)\right) +\mathrm{i} d_2 \left(1 + \mathcal{O}(r_0^{-1})\right) + \mathcal{O}_{r_0}(|\mu||\mathbf{d}| + |d_2|^{2} + |d_1|^3)\right]\\
	\end{split}
\end{equation}
where $\nu_n = \left(\frac{3}{8}\right)^{\frac{n}{2}}\frac{\pi}{3\Gamma(\frac{n}{2})}$, $\gamma=\langle \hat{U}_1^*, \mathbf{Q}(\hat{U}_0,\hat{U}_0) \rangle_{2}$ and $Y_n = - \frac{n\pi}{4}+ \mathcal{O}(|\mu|r_{0} + r_{0}^{-2})$.

\begin{lem}\label{Lem:Match}
    For each fixed choice of $0<r_1, r_0^{-1}\ll1$, there is a constant $\kappa_0>0$ such that intersections between the core manifold \eqref{Core:Am;param} and respective far-field trajectories \textnormal{(\ref{Sol:SpotA;r0}-\ref{Sol:SpotB;r0})} satisfy the following conditions for all $\mu<\kappa_0$.
    \begin{enumerate}[label=(\roman*)]
        \item For \eqref{Sol:SpotA;r0} with $\gamma\neq0$, we obtain $Y = Y_{n}$ and
        \begin{equation}\label{Match:SpotA}
            d_1 = (c_0 \mu)^{\frac{1}{2}}\frac{1}{\nu_n \gamma}\left(1 + \mathcal{O}\left(r_0^{-1} + r_1 + \mu^{\frac{1}{2}} + \mu^{\frac{n}{2}}\right)\right)\qquad d_2 = \mu\, \mathcal{O}\left(r_0^{-1} + r_1 + \mu^{\frac{1}{2}} + \mu^{\frac{n}{2}}\right).
        \end{equation}
                \item For \eqref{Sol:SpotA;r0} with $\gamma = \mu^{\frac{n}{4}}\mathcal{E}_{n}(\mu)^{-1} \tilde{\gamma}$, we obtain $Y = Y_{n}$ and
    \begin{equation}\label{Match:SpotA;fold}
            d_1 = \frac{2\left[\nu_n \widetilde{\gamma}\,\pm\sqrt{\nu_n^2 \widetilde{\gamma}^2 - \sqrt{c_0}c_3}\right]}{c_3}\,\mu^{\frac{2-n}{4}}\mathcal{E}_{n}(\mu)\left(1 + \mathrm{o}(1)\right), \qquad d_2 = \mu^{\frac{4-n}{4}}\mathcal{E}_{n}(\mu)\,\mathrm{o}(1).
        \end{equation}
        \item For \eqref{Sol:Rings;r0} with $c_3<0$ and $n<4$, we obtain $Y = Y_{n} + (2\pm1)\frac{\pi}{2}$ and
        \begin{equation}\label{Match:Rings}
            d_1 = \mu^{\frac{4-n}{4}}\mathcal{O}\left(r_0^{-1} + r_1 + \mu^{\frac{1}{2}}+ \mu^{\frac{4-n}{4}}\right),\qquad d_2 = \pm \frac{2 q_n}{\sqrt{|c_3|}} (c_0 \mu)^{\frac{4-n}{4}}\left(1 + \mathcal{O}\left(r_0^{-1} + r_1 + \mu^{\frac{1}{2}}+ \mu^{\frac{4-n}{4}}\right)\right).
        \end{equation}
        \item For \eqref{Sol:SpotB;r0} with $c_3<0$ and $n<4$, we obtain $Y = Y_{n} + (\mathrm{sgn}(\gamma)-1)\frac{\pi}{2}$ and
        \begin{equation}\label{Match:SpotB}
            d_1 = -\mathrm{sgn}(\gamma)\sqrt{\frac{2q_n}{\nu_n |\gamma|\sqrt{|c_3|}}} (c_0\mu)^{\frac{4-n}{8}}\,\left(1 + \mathcal{O}(r_{0}^{-1} + r_{1} + \mu^{\frac{4-n}{8}})\right),\qquad d_2 = \mu^{\frac{4-n}{4}}\mathcal{O}\left(r_{0}^{-1} + r_{1} + \mu^{\frac{4-n}{8}}\right).
        \end{equation}
    \end{enumerate}
\end{lem}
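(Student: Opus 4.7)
The plan is to equate, at $r = r_0$, the core parametrisation \eqref{Core:Am;param} (already written in the $(A,B)$ normal form coordinates, with the propagator phase $\phi(r_0)$ absorbed into $Y_n$) with each of the far-field trajectories \eqref{Sol:SpotA;r0}, \eqref{Sol:Rings;r0}, \eqref{Sol:SpotB;r0}. In every case the matching reduces to two complex equations, which split into real and imaginary parts to give four real equations in the real unknowns $d_1, d_2, Y$, together with the auxiliary amplitude $a$ in the spot cases. The phase $Y$ is pinned up to a discrete $\pi$-ambiguity by requiring the right-hand sides to be compatible with the reality of $d_1, d_2$; once $Y$ is fixed, leading-order inspection gives the quoted expressions for $d_1, d_2$, and a standard implicit function theorem argument anchored at that leading-order root absorbs the remainders $\xi_0, \xi_1, \xi_2$ and the core-side higher-order corrections into the stated error.

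I would dispatch cases (i), (iii), (iv) first. For case (i), the scaling $d_1 = \mathcal{O}(\mu^{1/2})$, $d_2 = \mathrm{o}(\mu)$ makes the leading $A$-equation real and forces $Y = Y_n$; the modulus of $A$ gives $d_1 \sim 2\mu^{(2-n)/4}a$, while the real part of $B$ gives $\gamma\nu_n d_1^2 \sim 2\sqrt{c_0}\mu^{(4-n)/4}a$, and eliminating $a$ yields \eqref{Match:SpotA}. For the rings (case (iii)), the ring $A$-amplitude scales like $r_0^{(2-n)/2}$, which matches the $-\mathrm{i}d_2 r_0 \cdot r_0^{-n/2}/2$ term in \eqref{Core:Am;param}; reality of the resulting equation forces the phase shift $Y = Y_n + (2\pm 1)\pi/2$, after which the imaginary part of the $B$-equation fixes $|d_2|$ at the $(c_0\mu)^{(4-n)/4}$ scale carried by $q_n$ and $|c_3|$, while its real part (where $\gamma\nu_n d_1^2$ must balance zero at leading order) forces $d_1$ into higher order. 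For Spot B (case (iv)), both $d_1$ and $a$ are nontrivial: the $A$-equation gives $d_1 \propto a\mu^{(4-n)/8}$, and substituting into the real part of the $B$-equation produces a scalar equation of the form $\gamma\nu_n d_1^2 \sim -2|c_3|^{-1/2} c_0^{(4-n)/4} q_n\mu^{(4-n)/4}$ whose sign consistency with $\mathrm{sgn}(\gamma)$ selects the phase $Y = Y_n + (\mathrm{sgn}(\gamma)-1)\pi/2$ and yields the quoted $|d_1|$.

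The main obstacle is the fold bifurcation of case (ii). Here we substitute the distinguished scaling $\gamma = \mu^{n/4}\mathcal{E}_n(\mu)^{-1}\tilde\gamma$ into the Spot A matching equations; this is precisely the regime in which the core term $\gamma\nu_n d_1^2$ becomes comparable to the transition-chart correction $(c_3/\sqrt{c_0})a^2/\mathcal{E}_n(\mu)^2$ carried by the $B$-component of \eqref{Sol:SpotA;r0}, which in case (i) sat in the higher-order remainder. After eliminating $a$ using the $A$-equation as before, the $B$-equation becomes a genuine quadratic in $d_1$ with discriminant proportional to $\nu_n^2\tilde\gamma^2 - \sqrt{c_0}c_3$; when positive it has two real roots giving the branches \eqref{Match:SpotA;fold}, and when it vanishes the branches coalesce, yielding the implicit fold curve \eqref{Fold:curve} once the scaling of $\gamma$ is undone. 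The delicate technical point is to verify that no other terms in the core expansion or in $\xi_0$ enter at the same order as this quadratic balance — which relies on the specific size of $\mathcal{E}_n(\mu)$ determined by $\int_{r_1\mu^{-1/2}}^{r_0} p^{-n}\,\mathrm{d}p$ in Lemma~\ref{Lem:Transition;Evo} — and then to apply a uniform implicit function theorem around each root, separately tracked as they approach the fold.
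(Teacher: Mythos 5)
Your proposal follows essentially the same route as the paper: equate the core parametrisation with each far-field trajectory at $r=r_0$, pin the phase $Y$ by reality/sign considerations, solve the resulting leading-order algebraic system for $d_1$, $d_2$ (and $a$), and invoke the implicit function theorem to absorb the remainders — including, for case (ii), the same quadratic balance between $\gamma\nu_n d_1^2$ and the $(c_3/\sqrt{c_0})a^2\mathcal{E}_n(\mu)^{-2}$ correction that produces the discriminant $\nu_n^2\widetilde{\gamma}^2-\sqrt{c_0}c_3$. The only cosmetic difference is that the paper makes the scalings of $d_1,d_2,a$ explicit as coordinate changes before passing to the leading-order system, whereas you eliminate $a$ directly; the content is identical.
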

\begin{proof}
We consider each case separately:
    \begin{enumerate}[label=(\roman*)]
        \item Setting the far-field trajectory \eqref{Sol:SpotA;r0} and the core parametrisation \eqref{Core:Am;param} equal to each other, we look to solve
\begin{equation}\label{match:SpotA;corefar}
	\begin{split}
    		2\mu^{\frac{2-n}{4}} a\mathrm{e}^{\mathrm{i}(Y-Y_n)}(1 + \xi_0) ={}& d_1 \left(1 + \mathcal{O}(r_0^{-1})\right) -\mathrm{i}d_2  r_0\left(1 + \mathcal{O}(r_0^{-1})\right) + \mathcal{O}_{r_0}(|\mu||\mathbf{d}| + |\mathbf{d}|^{2}),\\
    		2\sqrt{c_0}\, g_n(a,\mu)\mathrm{e}^{\mathrm{i}(Y-Y_n)}(1 + \xi_0) ={}& \gamma\, d_1^2\left(\nu_n + \mathcal{O}\left(r_0^{-\frac{n}{2}}+r_0^{-1}\right)\right) + \mathrm{i} d_2 \left(1 + \mathcal{O}(r_0^{-1})\right)\\
      &\hspace{0.25\linewidth} + \mathcal{O}_{r_0}(|\mu||\mathbf{d}| + |d_2|^{2} + |d_1|^3), 
	\end{split}
\end{equation}
where $g_n(a,\mu) := \mu^{\frac{4-n}{4}} a(1 + \frac{c_3}{\sqrt{c_0}}\mathcal{E}_{n}(\mu)^{-2} a^2)$. We introduce the following coordinate transformations 
\begin{equation}\label{match:SpotA;scaling}
    	d_{1} = \sqrt{c_0}\mu^{\frac{1}{2}}\widetilde{d}_{1},\qquad d_{2} = c_0\mu\widetilde{d}_{2},\qquad a = \frac{\sqrt{c_0}}{2}\mu^{\frac{n}{4}}\widetilde{a},\qquad  Y = Y_{n} + \widetilde{Y},
\end{equation}
so that \eqref{match:SpotA;corefar} becomes
\begin{equation}\label{match:SpotA;corefar-scale}
	\begin{split}
    \widetilde{a}\mathrm{e}^{\mathrm{i}\widetilde{Y}}(1 + \xi_0) ={}& \widetilde{d}_{1} \left(1 + \mathcal{O}(r_0^{-1})\right) + \mathcal{O}_{r_0}(\mu^{\frac{1}{2}}),\\
    		\widetilde{a}\mathrm{e}^{\mathrm{i}\widetilde{Y}}(1 + \xi_0) ={}& \gamma\, \widetilde{d}_{1}^2\left(\nu_n + \mathcal{O}\left(r_0^{-\frac{n}{2}}+r_0^{-1}\right)\right)  + \mathrm{i}\widetilde{d}_{2} \left(1 + \mathcal{O}(r_0^{-1})\right)  + \mathcal{O}_{r_0}(\mu^{\frac{1}{2}} + \mu^{\frac{n}{2}}\mathcal{E}_{n}(\mu)^{-2}). 		
	\end{split}
\end{equation}
We note that the final remainder term in \eqref{match:SpotA;corefar-scale} can be estimated by $\mathcal{O}_{r_0}(\mu^{\kappa})$ for some $\kappa>0$ for each choice of $n>0$ and, in particular,
\begin{equation*}
    \mathcal{O}_{r_0}(\mu^{\frac{n}{2}}\mathcal{E}_{n}(\mu)^{-2}) = \begin{cases}
        \mathcal{O}_{r_0}(\mu^{\frac{n}{2}}), & 0<n<1,\\
        \mathcal{O}_{r_0}(\mu^{\frac{1}{2}}|\log(\mu)|), & n=1,\\
        \mathcal{O}_{r_0}(\mu^{\frac{1}{2}}), & n>1.\\
    \end{cases}
\end{equation*}
Then, taking $\mu,r_1,r_0^{-1}\ll1$, we obtain the leading-order system
\begin{equation*}
	\begin{split}
      \widetilde{a}\mathrm{e}^{\mathrm{i}\widetilde{Y}} &= \widetilde{d}_{1},\qquad
    		\widetilde{a}\mathrm{e}^{\mathrm{i}\widetilde{Y}} = \nu_n \widetilde{\gamma}\,\widetilde{d}_{1}^2 + \mathrm{i} \widetilde{d}_{2}. 
	\end{split}
\end{equation*}
which has a solution of the form
\begin{equation*}
    \widetilde{d}_1 = \widetilde{a} = \frac{1}{\nu_{n} \gamma}, \qquad \widetilde{d}_2=0,\qquad \widetilde{Y}=0.
\end{equation*}
\item In the case when $\gamma = \mu^{\frac{n}{4}}\mathcal{E}_{n}(\mu)^{-1} \tilde{\gamma}$, we instead introduce the following coordinate transformations 
\begin{equation}\label{match:SpotA;scaling-fold}
    	d_{1} = \sqrt{c_0}\mu^{\frac{2-n}{4}}\mathcal{E}_{n}(\mu)\widetilde{d}_{1},\qquad d_{2} = c_0\mu^{\frac{4-n}{4}}\mathcal{E}_{n}(\mu)\widetilde{d}_{2},\qquad a = \frac{\sqrt{c_0}}{2}\mathcal{E}_{n}(\mu)\widetilde{a}, \qquad Y = Y_{n} + \widetilde{Y},
\end{equation}
so that \eqref{match:SpotA;corefar} becomes  
\begin{equation}\label{match:SpotA;corefar-fold}
	\begin{split}
    		\widetilde{a}\mathrm{e}^{\mathrm{i}\widetilde{Y}}(1 + \xi_0) ={}& \widetilde{d}_{1} \left(1 + \mathcal{O}(r_0^{-1})\right) + \mathcal{O}_{r_0}(\mu^{\frac{1}{4}}),\\
    		\widetilde{a}\left[1+\frac{c_3\sqrt{c_0}}{4}\widetilde{a}^2 \right]  \mathrm{e}^{\mathrm{i}\widetilde{Y}}(1 + \xi_0) ={}& \tilde{\gamma} \,\widetilde{d}_{1}^2 \left(\nu_n  + \mathcal{O}\left(r_0^{-\frac{n}{2}}+r_0^{-1}\right)\right) + \mathrm{i} \widetilde{d}_{2} \left(1 + \mathcal{O}(r_0^{-1})\right) + \mathcal{O}_{r_0}(\mu^{\frac{1}{2}}). 
	\end{split}
\end{equation}
Note, here we have estimated $\mathcal{O}_{r_0}(\mu^{\frac{2-n}{4}}\mathcal{E}_{n}(\mu))$ as $\mathcal{O}_{r_0}(\mu^{\frac{1}{4}})$, since
\begin{equation*}
    \mathcal{O}_{r_0}(\mu^{\frac{2-n}{4}}\mathcal{E}_{n}(\mu)) = \begin{cases}
        \mathcal{O}_{r_0}(\mu^{\frac{2-n}{4}}), & 0<n<1,\\
        \mathcal{O}_{r_0}(\mu^{\frac{1}{4}}|\log(\mu)|^{-\frac{1}{2}}), & n=1,\\
        \mathcal{O}_{r_0}(\mu^{\frac{1}{4}}), & n>1.\\
    \end{cases}
\end{equation*}
Then, taking $0<\mu,r_1,r_{0}^{-1}\ll1$, we obtain the leading-order system
\begin{equation*}
	\begin{split}
    		\widetilde{a}\mathrm{e}^{\mathrm{i}\widetilde{Y}} &= \widetilde{d}_{1},\\
    		\widetilde{a}\left[1+\frac{\sqrt{c_0}c_3}{4}\widetilde{a}^2 \right]  \mathrm{e}^{\mathrm{i}\widetilde{Y}} &=\nu_n \widetilde{\gamma}\,\widetilde{d}_{1}^2 + \mathrm{i} \widetilde{d}_{2},
	\end{split}
\end{equation*}
which has solutions of the form
\begin{equation}
    \widetilde{d}_1 = \widetilde{a} = \frac{2\left[\nu_n \widetilde{\gamma}\,\pm\sqrt{\nu_n^2 \widetilde{\gamma}^2 - \sqrt{c_0}c_3}\right]}{\sqrt{c_0}c_3}, \qquad \widetilde{d}_2=0,\qquad \widetilde{Y}=0.
\end{equation}
\item Setting the far-field trajectory \eqref{Sol:Rings;r0} and the core parametrisation \eqref{Core:Am;param} equal to each other, we look to solve
\begin{equation}\label{match:Rings;corefar}
	\begin{split}
    		2q_n\mu^{\frac{4-n}{4}} r_0 \textnormal{e}^{\textnormal{i}(Y-Y_n)}(1 + \xi_1) &= d_1 \left(1 + \mathcal{O}(r_0^{-1})\right) -\mathrm{i}d_2  r_0\left(1 + \mathcal{O}(r_0^{-1})\right) + \mathcal{O}_{r_0}(|\mu||\mathbf{d}| + |\mathbf{d}|^{2}),\\
    		2 q_n\mu^{\frac{4-n}{4}}\textnormal{e}^{\textnormal{i}(Y-Y_n)}(1 + \xi_1) &= -\gamma\, d_1^2\left(\nu_n + \mathcal{O}\left(r_0^{-\frac{n}{2}}+r_0^{-1}\right)\right) - \mathrm{i} d_2 \left(1 + \mathcal{O}(r_0^{-1})\right)\\
      &\hspace{0.35\linewidth} + \mathcal{O}_{r_0}(|\mu||\mathbf{d}| + |d_2|^{2} + |d_1|^3). 
	\end{split}
\end{equation}
We introduce the following coordinate transformations 
\begin{equation}\label{match:Rings;scaling}
    	d_{1} = 2q_n\mu^{\frac{4-n}{4}}r_0\widetilde{d}_{1},\qquad d_{2} = 2q_n\mu^{\frac{4-n}{4}}\widetilde{d}_{2},\qquad Y = Y_{n} + \widetilde{Y},
\end{equation}
so that \eqref{match:Rings;corefar} becomes
\begin{equation}\label{match:Rings;corefar-scale}
	\begin{split}
    		\textnormal{e}^{\textnormal{i}\widetilde{Y}}(1 + \xi_1) &= \widetilde{d}_{1}\left(1 + \mathcal{O}(r_0^{-1})\right) -\mathrm{i}\widetilde{d}_{2}\left(1 + \mathcal{O}(r_0^{-1})\right) + \mathcal{O}_{r_0}(\mu^{\frac{4-n}{4}}),\\
    	\textnormal{e}^{\textnormal{i}\widetilde{Y}}(1 + \xi_1) &= - \mathrm{i} \widetilde{d}_{2} \left(1 + \mathcal{O}(r_0^{-1})\right) + \mathcal{O}_{r_0}(\mu^{\frac{4-n}{4}}). 
	\end{split}
\end{equation}
Then, taking $0 < \mu,r_{1}, r_{0}^{-1} \ll 1$, we obtain the leading-order system
\begin{equation*}
	\begin{split}
		\textnormal{e}^{\textnormal{i}\widetilde{Y}} &= \widetilde{d}_{1} -\mathrm{i}\widetilde{d}_{2},\qquad
    	\textnormal{e}^{\textnormal{i}\widetilde{Y}} = -\mathrm{i} \widetilde{d}_{2}. 
	\end{split}
\end{equation*}
 which has solutions of the form
\begin{equation*}
    \widetilde{d}_1 = 0, \qquad \widetilde{d}_{2} = \pm 1, \qquad \widetilde{Y} = \frac{(2\pm1)\pi}{2}.
\end{equation*}
\item Setting the far-field trajectory \eqref{Sol:SpotB;r0} and the core parametrisation \eqref{Core:Am;param} equal to each other, we look to solve
\begin{equation}\label{match:SpotB;corefar}
	\begin{split}
    		2a q_n\mu^{\frac{4-n}{8}} \textnormal{e}^{\textnormal{i}(Y-Y_n)}(1 + \xi_2) &= d_1 \left(1 + \mathcal{O}(r_0^{-1})\right) -\mathrm{i}d_2  r_0\left(1 + \mathcal{O}(r_0^{-1})\right) + \mathcal{O}_{r_0}(|\mu||\mathbf{d}| + |\mathbf{d}|^{2}),\\
    		2q_n\mu^{\frac{4-n}{4}}\textnormal{e}^{\textnormal{i}(Y-Y_n)}(1 + \xi_2) &= -\gamma\, d_1^2\left(\nu_n + \mathcal{O}\left(r_0^{-\frac{n}{2}}+r_0^{-1}\right)\right) - \mathrm{i} d_2 \left(1 + \mathcal{O}(r_0^{-1})\right)\\
      &\hspace{0.35\linewidth} + \mathcal{O}_{r_0}(|\mu||\mathbf{d}| + |d_2|^{2} + |d_1|^3). 
	\end{split}
\end{equation}
We introduce the following coordinate transformations 
\begin{equation}\label{match:SpotB;scaling}
    	d_{1} = \mu^{\frac{4-n}{8}}\widetilde{d}_{1},\qquad d_{2} = \mu^{\frac{4-n}{4}}\widetilde{d}_{2},\qquad Y = Y_{n} + \widetilde{Y},
\end{equation}
so that \eqref{match:SpotB;corefar} becomes
\begin{equation}\label{match:SpotB;corefar-scale}
	\begin{split}
    		2aq_n\textnormal{e}^{\textnormal{i}\widetilde{Y}}(1 + \xi_2) &= \widetilde{d}_{1}\left(1 + \mathcal{O}(r_0^{-1})\right) + \mathcal{O}_{r_0}(\mu^{\frac{4-n}{8}}),\\
    	2q_n\textnormal{e}^{\textnormal{i}\widetilde{Y}}(1 + \xi_2) &= -\gamma\, \widetilde{d}_1^2\left(\nu_n + \mathcal{O}\left(r_0^{-\frac{n}{2}}+r_0^{-1}\right)\right) -\mathrm{i}\widetilde{d}_{2}\left(1 + \mathcal{O}(r_0^{-1})\right)  + \mathcal{O}_{r_0}(\mu^{\frac{4-n}{4}}). 
	\end{split}
\end{equation}
Then, taking $0 < \mu,r_{1}, r_{0}^{-1} \ll 1$, we obtain the leading-order system
\begin{equation*}
	\begin{split}
		2aq_n\textnormal{e}^{\textnormal{i}\widetilde{Y}} = \widetilde{d}_{1},\qquad
    	2q_n\textnormal{e}^{\textnormal{i}\widetilde{Y}} = -\nu_n \gamma \widetilde{d}_1^2 -\mathrm{i}\widetilde{d}_{2}, 
	\end{split}
\end{equation*}
which has a solution of the form
\begin{equation*}
    \widetilde{d}_1 = -\mathrm{sgn}(\gamma)\sqrt{\frac{2 q_n}{\nu_n |\gamma|}}, \qquad \widetilde{d}_{2} = 0, \qquad a = \sqrt{\frac{1}{2q_n \nu_n |\gamma|}}, \qquad \widetilde{Y} = \frac{(\mathrm{sgn}(\gamma)-1)\pi}{2}.
\end{equation*}
    \end{enumerate}
    Having determined the leading order solution in each case, we now apply the implicit function theorem to solve \textnormal{(\ref{match:SpotA;corefar-scale}, \ref{match:SpotA;corefar-fold}, \ref{match:Rings;corefar-scale}, \ref{match:SpotB;corefar-scale})} uniquely for all $0<\mu, r_{1}, r_{0}^{-1}\ll1$. Inverting the respective coordinate transformations \textnormal{(\ref{match:SpotA;scaling}, \ref{match:SpotA;scaling-fold}, \ref{match:Rings;scaling}, \ref{match:SpotB;scaling})}, we arrive at our result.
\end{proof}

\section{Discussion}\label{s:discussion}

In this paper, we have proven that localised $(n+1)$-dimensional radial patterns emerge from non-degenerate Turing bifurcations in general two-component reaction-diffusion systems. Extending the approach of \cite{lloyd2009localized,mccalla2013spots,hill2023approximate,hill2024dihedral}, we determined that spot A patterns emerge for all values of the dimension parameter $n>0$, where the core profile depends continuously on $n$ and possesses a fixed asymptotic scaling $\mu^{\frac{1}{2}}$. In contrast, we found that ring and spot B patterns only emerge for $0<n<4$, where their core profiles depend continuously on $n$ and they possess an asymptotic scaling $\mu^{\frac{4-n}{4}}$, $\mu^{\frac{4-n}{8}}$, respectively. The core profile for each pattern depends on $(n+1)$-dimensional Bessel functions $J^{(n)}_{\ell}(r)$, which can be written in terms of standard Bessel functions; this highlights the connection between core profiles of two-dimensional and three-dimensional radial patterns that was previously unclear. We observe that spots are larger and sharper as $n$ increases, and that their oscillations are more strongly damped. This suggests that it may be harder to identify the associated wave number of localised three dimensional spots in physical and biological examples, preventing their characterisation as Turing patterns. However, our results show that such patterns are strongly associated with Turing bifurcations as well as their lower-dimensional counterparts. Our results agree with previous findings in \cite{lloyd2009localized,mccalla2013spots}, while providing a proof for the previously unproven existence of three-dimensional rings and presenting simplified proofs for spot B patterns and solutions of the non-autonomous Ginzburg--Landau equation \eqref{GL:comp} with $0<n<3$. 

We briefly discuss the intuition behind the asymptotic scaling $\mu^{\frac{4-n}{4}}$ for localised rings and the corresponding requirement that $0<n<4$. As mentioned previously, this scaling was predicted in \cite{mccalla2013spots} through a formal analysis, however the intuition for this scaling was not explored further. In the derivation for localised ring patterns $\mathbf{u}_{R}(r)$, we seek solutions that satisfy the far-field scaling $|\mathbf{u}_{R}(r)|=\mathcal{O}(\mu^{\frac{1}{2}})$ when $r=\mathcal{O}(\mu^{-\frac{1}{2}})$. If we suppose that our solution satisfies $|\mathbf{u}_{R}(r)| = \mathcal{O}(\mu^{\alpha}\,|r J_{1}^{(n)}(r)|)$ for some $\alpha>0$, then (using Table~\ref{table:Bessel}) $|\mathbf{u}_{R}(r)| = \mathcal{O}(\mu^{\alpha}\,r^{\frac{2-n}{2}})$ for large values of $r$. Furthermore, for $r=\mathcal{O}(\mu^{-\frac{1}{2}})$ we obtain $|\mathbf{u}_{R}(r)| = \mathcal{O}(\mu^{\alpha+\frac{n-2}{4}})$ and so rings must satisfy $\alpha + \frac{n-2}{4}= \frac{1}{2}$. Hence, the far-field profile is always smaller than $\mathcal{O}(\mu^{\frac{1}{2}})$ if $n>4$.
We note that localised rings may still emerge for $n>4$, except they will not satisfy the standard far-field scaling.
Instead, localised patterns with the form $|\mathbf{u}(r)| = \mathcal{O}(\mu^{\alpha}\,|r J_{1}^{(n)}(r)|)$ for $n>4$, either must satisfy $|\mathbf{u}(r)|=\mathcal{O}(\mu^{\kappa})$ with $\kappa > \frac{1}{2}$ in the region $r=\mathcal{O}(\mu^{-\frac{1}{2}})$,  or $|\mathbf{u}(r)|=\mathcal{O}(\mu^{\frac{1}{2}})$ in the region $r=\mathcal{O}(\mu^{-\beta})$ with $\beta<\frac{1}{2}$. 
We leave this approach for future study.

While we have considered the emergence of localised radial patterns from an $(n+1)$-dimensional Turing instability, very little is understood of their behaviour away from this bifurcation point. As solutions become larger, the nonlinearity of \eqref{R-DEqn} becomes more important and localised radial solutions may no longer reflect the profiles predicted by our analysis. As seen in Figure~\ref{fig:nSpot}(a), spot A patterns may undergo snaking behaviour as they move away from their bifurcation point. This is behaviour is known as \textit{homoclinic snaking} (first introduced in \cite{Woods1999Homoclinic}) when $n=0$, which has been the subject of over two decades of research, (see \cite{Champneys2021Editorial}) but is still not well-understood for $n>0$. While there has been some numerical studies of the bifurcation structure of spots in two and three dimensions \cite{McCalla2010Snaking,Gomila2021RadialSnaking,Sun2024dimensional}, and analytical results regarding ring patterns in $n$-spatial dimensions \cite{Bramburger2019Rolls}, this topic still requires further investigation.

A natural next step in this analysis would be to consider the stability of localised radial patterns for various spatial dimensions, for which there has been a recent numerical study \cite{Sun2024dimensional}. In particular, the transition of localised one-dimensional patterns to axisymmetric patterns (and from axisymmetric to spherically-symmetric) would be an interesting phenomenon to explore. A rigorous understanding of the stability of localised radial patterns is not currently well established, even in the simple cases of $n=1,2$, and so this would be a fruitful area of future study. Our current approach could also be extended to higher-dimensional symmetries, such as the dihedral patterns considered in \cite{hill2023approximate,hill2024dihedral}. Much of the approach presented \S\,\ref{s:Spatial_Dynamics} is equivalent to the results presented in \cite{hill2023approximate,hill2024dihedral}, and our simplified proof for Spot B patterns without blow-up coordinates presents a new strategy for obtaining dihedral spot B-type patterns in a future work. However, studying the dimensional effects of fully localised non-radial patterns would require some kind of formulation of $(n+1)$-dimensional spherical harmonics that vary continuously with respect to $n$ and would be a highly nontrivial extension of the work presented in \cite{hill2023approximate,hill2024dihedral}. 

\begin{Acknowledgment}
    The author gratefully acknowledges support from the Alexander von Humboldt foundation, and would like to thank David Lloyd for his comments on an earlier draft of this manuscript.   Additionally, the author would like to thank Boris Buffoni and François Genoud for their valuable insight on ground states to elliptic PDEs.
\end{Acknowledgment}

\bibliographystyle{abbrv}
\bibliography{Bibliography.bib}

\end{document}